\documentclass[]{amsart}
\usepackage[left=1.5in,right=1.5in,top=1.5in,bottom=1.5in]{geometry}
\usepackage{tikz-cd}
\usepackage{tikz}
\usepackage{amsfonts,latexsym,rawfonts,amsmath,amssymb, amsthm,bm}
\usepackage[T1]{fontenc}
\usepackage{latexsym,lscape,rawfonts}
\usepackage{hyperref}
\usepackage{CJK}
\usepackage{mathrsfs, enumitem}
\usepackage[title]{appendix}
\usepackage{rotating}
\usepackage{graphicx}
\usepackage{galois}
\usepackage[all,cmtip]{xy}
\usepackage{extarrows,color}
\usepackage{times}

\newenvironment{thmbis}[1]
{\renewcommand{\thethm}{\ref{#1}$ $}%
	\addtocounter{thm}{-1}%
	\begin{thm}}
	{\end{thm}}

\newtheorem{thm}{Theorem}[section]

\newtheorem*{fixed point criterion}{\fixed point criterion}
\newtheorem{cor}[thm]{Corollary}
\newtheorem{lem}[thm]{Lemma}
\newtheorem{prop}[thm]{Proposition}

\theoremstyle{definition}
\newtheorem{defn}[thm]{Definition}
\newtheorem{exam}[thm]{Example}
\newtheorem{ques}[thm]{Question}
\theoremstyle{remark}
\newtheorem{rem}[thm]{Remark}
\numberwithin{equation}{section}

\newcommand{\Z}{\mathbb Z}

\newcommand{\C}{\mathcal C}
\newcommand{\A}{\mathcal A}
\newcommand{\mS}{\mathcal S}

\newcommand{\E}{\mathrm{E}}
\newcommand{\V}{\mathrm{V}}

\makeatletter
\newcommand*\bigcdot{\mathpalette\bigcdot@{1.5}}
\newcommand*\bigcdot@[2]{\mathbin{\vcenter{\hbox{\scalebox{#2}{$\m@th#1\bullet$}}}}}
\makeatother

\begin{document}
	
\title{Alternating and symmetric separability of free products}

\author{Dongxiao Zhao}
\address{School of Mathematics and Statistics, Xi'an Jiaotong University, Xi'an 710049, CHINA}
\email{zdxmath@stu.xjtu.edu.cn\\ORCID: 0009-0006-7919-9244}
	
\author{Qiang Zhang}
\address{School of Mathematics and Statistics, Xi'an Jiaotong University, Xi'an 710049, CHINA}
\email{zhangq.math@mail.xjtu.edu.cn\\ORCID: 0000-0001-6332-5476}
	
\keywords{Locally extended residually finite, alternating and symmetric separable, free product, precover, relative Cayley graph}
	
\thanks{The authors are partially supported by NSFC (No. 12471066) and the Shaanxi Fundamental Science Research Project for Mathematics and Physics (No. 23JSY027).}

\subjclass[2020]{20F65, 20E26, 57M07.}

\date{\today}


\begin{abstract}

 Let $F \ast G$ be a free product of a free group $F$ and a LERF group $G$. In this note, we provide sufficient conditions for a subgroup $H$ of $F \ast G$ to be $\A\cup \mS$-separable, that is, for any finite set $\{\gamma_1, \ldots, \gamma_n\} \subset (F \ast G) \setminus H$, there is a surjection $f$ from $F \ast G$ to an alternating or symmetric group such that $f(\gamma_i) \notin f(H)$ for all $i$. As a corollary, any finitely generated infinite-index subgroup of a free group is $\A\cup \mS$-separable in the free product of the free group and an arbitrary LERF group, generalizing a result of Wilton. 
\end{abstract}
\maketitle

    \section{Introduction}
    Residual properties have been widely discussed on the topic of infinite discrete groups. Let $\C$ be a class of groups. A group $G$ is said to be \textit{residually $\C$} if for every $g \in G\setminus \{e\}$, there exists a homomorphism $f: G \to C$ onto some group $C \in \C$, such that $f(g)$ is nontrivial. The property of residual finiteness admits several generalizations, most notably the concept of LERF:
    a group $G$ is said to be \textit{locally extended residually finite} (\textit{LERF}) if for any finitely generated subgroup $H \leq G$ and any $\gamma \in G \setminus H$, there is a homomorphism $f$ from $G$ onto a finite group such that $f(\gamma)\notin f(H)$.
   
    The LERFness of groups is a property closely related to low dimensional topology and has been widely studied. Large classes of groups are known to be LERF: for example, finitely generated free groups \cite{Ha}, closed orientable hyperbolic surface groups \cite{Sc}, compact Seifert fiber space groups \cite{Sc}, Sol manifold groups \cite{Mal} and hyperbolic 3-manifold groups \cite{Ag} \cite{Wise}. In fact, by \cite{Su}, a compact (with empty or tori boundary) orientable irreducible 3-manifold $M$ with $\pi_1(M)$ LERF if and only if $M$ supports one of Thurston's eight geometries. Moreover,
    by \cite{Wi08}, limit groups are LERF, which generalizes both the cases of free groups and surface groups. However, nontrivial graph manifold groups are not LERF \cite{NW}, noncompact arithmetic hyperbolic $m$-manifold groups with $m>3$ and compact arithmetic hyperbolic $m$-manifold groups with $m>4$ (except when defined by octonions with $m=7$) are not LERF \cite{Su}.
    
    The LERFness admits a broader generalization through the restriction of its finite quotients to a more specific class, for example, the class of alternating groups $\A$ and symmetric groups $\mS$. 
    \begin{defn}
        Let $H$ be a subgroup of a finitely generated group $G$ and let $\C$ be a class of groups. We say that $H$ is $\C$-\textit{separable} in $G$ if for any finite set $\{\gamma_1, \ldots, \gamma_n\} \subset G \setminus H$, there is a surjective homomorphism $f$ from $G$ to a group in $\C$ such that $f(\gamma_i) \notin f(H)$ for all $i$.
    \end{defn}
    Note that $\C$-separability is a property of a certain subgroup, in particular, the trivial subgroup of $G$ is $\C$-separable implies that $G$ is residually $\C$.
    
    \textbf{Notations.} Throughout this paper, let $F_r$ be a free group of rank $r>1$ and let $H^g=gHg^{-1}$ be the conjugation of a subgroup $H$ by $g$. For a set $V$, let $|V|$ denote the cardinality of $V$. 
    
    In \cite{Wi}, Wilton showed that the finitely generated infinite-index subgroups of free groups are $\A$-separable. More precisely,

    \begin{thm}\cite[Theorem A]{Wi}\label{F is LERA}
      Let $H$ be a finitely generated subgroup of infinite index in the free group $F_r$ and let $\{\gamma_1, \ldots, \gamma_n\}$ be any finite set of $F \setminus H$. Then there is a surjection $f$ from $F_r$ to some finite alternating group $A_k$ such that $f(\gamma_i) \notin f(H)$ for all $i$.
    \end{thm}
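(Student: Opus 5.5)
We plan to argue through Stallings graphs and to finish with a primitivity criterion (Jordan's theorem) that forces the permutation image to contain the alternating group. Fix a free basis $x_1,\dots,x_r$ of $F_r$. Let $\Gamma_H$ be the core Stallings graph of $H$ with a base vertex: a finite, folded, $x_i$-labelled graph. Since $H$ has infinite index, $\Gamma_H$ is not a covering, so some vertex is missing an incoming or outgoing edge for some label $x_i$.

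\textbf{Step 1: enlarge the graph.} Attach to $\Gamma_H$ the finitely many reduced paths reading $\gamma_1,\dots,\gamma_n$ from the base point, and fold. The result $\Delta$ is still a finite folded graph whose basepoint loops represent exactly $H$. Since $\gamma_i\notin H$, the path reading $\gamma_i$ ends at a vertex $v_i\neq$ base. Because $H$ has infinite index, $\Delta$ is still not a cover.

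\textbf{Step 2: complete to a permutation action.} Each $x_i$ is a partial injection on $V(\Delta)$. We add a large number $N$ of extra vertices and extend each $x_i$ to a permutation $\sigma_i$ of $V=V(\Delta)\sqcup\{1,\dots,N\}$. This gives a homomorphism $f:F_r\to \mathrm{Sym}(V)$. For every $h\in H$, $f(h)$ fixes the base point $b$. Moreover $b\cdot f(\gamma_i)=v_i\ne b$, since paths in $\Delta$ are preserved by the completion. Hence $f(\gamma_i)\notin f(H)\subseteq \mathrm{Stab}(b)$. This is the standard proof of Hall's theorem. The task is to choose the completion so that $f(F_r)$ is $A_{|V|}$ exactly, rather than $S_{|V|}$ or something smaller.

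\textbf{Step 3: force alternating or symmetric.} We choose the extensions so that:
\begin{itemize}
\item the action is transitive, by connecting the new vertices to $\Delta$ through a free slot of some $x_j$;
\item some element, e.g.\ a suitable power of $\sigma_j$, acts as a $p$-cycle for a prime $p$ with $p\le |V|-3$ and all other cycles of $\sigma_j$ of length coprime to $p$;
\item the action is primitive. Primitivity follows automatically if we arrange, say, that $\sigma_1$ fixes a point and acts as one long cycle on the rest, so the action is $2$-transitive.
\end{itemize}
Concretely, we can route the free $x_1$-slot through all new vertices and close it into one huge cycle. We can use another generator $x_2$ (rank $>1$) to realize a prime cycle of length $p$ with the complementary cycle lengths coprime to $p$. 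By Jordan's theorem, a primitive group containing a $p$-cycle with $p\le |V|-3$ contains $A_{|V|}$. Finally, to land in $A_{|V|}$ rather than $S_{|V|}$, we adjust parities. Each $\sigma_i$'s parity can be changed by adding two extra new vertices, forming either a transposition or two fixed points in the $x_i$-action, without affecting the other properties. So all $\sigma_i$ can be made even, and then $f(F_r)=A_{|V|}$.

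\textbf{Main obstacle.} The delicate step is Step 3: simultaneously realizing the prime cycle, primitivity and parity, while respecting the partial injections already prescribed on $\Delta$. The cycle structure of the completed $\sigma_i$ is only partially free, because $\Delta$ may already contain closed $x_i$-cycles of arbitrary lengths. To handle this, I would make $N$ much larger than $|V(\Delta)|$. I would then choose $p$ a prime larger than every existing cycle length and than $|V(\Delta)|$, but at most $N/2$. Extending through a free slot, we make one new cycle of length $p$ in $\sigma_2$, and take $\sigma_2^{m}$, with $m$ the product of the other cycle lengths, to isolate a genuine $p$-cycle. The existence of such $p$ is guaranteed by Bertrand's postulate. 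The existence of a free slot for at least one generator is exactly where infinite index is used. If only one generator has free slots, I would first conjugate or change basis, or use that same generator for both the long cycle and the prime cycle via its powers.
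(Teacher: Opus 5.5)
\textbf{The gap is primitivity.} Jordan's prime-cycle theorem needs the image of $f$ to be primitive. The only mechanism you offer is to make $\sigma_1$ a fixed point plus one $(|V|-1)$-cycle, so that the action is $2$-transitive. In general that shape cannot be achieved. $\sigma_1$ must extend the partial injection given by the $x_1$-edges of $\Delta$, so every closed $x_1$-cycle of length $\ge 2$ already in $\Delta$ survives as a cycle of $\sigma_1$. Routing the free slot through the new vertices only merges the open $x_1$-chains. You noticed closed cycles as an obstacle for the $p$-cycle, but they equally break the $2$-transitivity argument.

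For example, take $H=\langle x_1^2,\dots,x_r^2\rangle$ and $\gamma_1=x_1$. Then $\Delta$ is the core graph: a base vertex $b$ with $x_i$-cycles $b\to c_i\to b$. Every completion $\sigma_i$ contains the transposition $(b\,c_i)$. So once new vertices are added, no generator, whichever one you call $x_1$, is a fixed point plus a single long cycle. Your fallback does not help. A change of basis recomputes $\Delta$ with no control over its cycles. Using one generator for both the long cycle and the $p$-cycle is incompatible with that shape, and so is toggling the parity of $\sigma_1$ by giving it a transposition or two new fixed points. Without primitivity, Jordan's theorem gives nothing.

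\textbf{How to repair it.} The construction the paper recalls from Wilton (Lemma~\ref{alternating embedding}) avoids this by making the total number of vertices a prime $p$, so transitivity alone forces primitivity. The long $x_1$-chain $W_n$, on which $x_2,\dots,x_r$ act trivially, makes $x_2$ a nontrivial element moving fewer than $k+5$ points. The bounded-support form of Jordan's theorem (Lemma~\ref{Jordan for p}) then applies. The sign vector $s$ in the four-vertex gadget $V_s$ flips the parity of each generator without changing the vertex count. That is exactly what lets one reach $A_p$ while keeping the degree prime.

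Within your own framework, another fix is to give the big $\sigma_1$-cycle (through the open $x_1$-chains and new vertices) prime length $q$ with $|V|/2<q\le |V|-3$. A transitive group containing an element with a cycle of prime length $q>|V|/2$ is primitive. Indeed, the blocks meeting that cycle form one orbit of size dividing $q$. Either the cycle lies in one block, forcing block size $>|V|/2$, or it meets $q$ blocks of size $\ge 2$, forcing $|V|\ge 2q$. All other cycles of $\sigma_1$ are shorter than $q$, so a power of $\sigma_1$ is a $q$-cycle and Jordan's prime-cycle theorem applies directly. You would still need to make the parity corrections without disturbing $q$ or transitivity. For $i\neq 1$, you can let $\sigma_i$ swap two new vertices of the big cycle.
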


   Wilton also conjectured that the same result of Theorem \ref{F is LERA} holds for closed orientable hyperbolic surface groups. In 2021, Buran \cite{Bu} confirmed Wilton's conjecture. Moreover, he generalized this result to right-angled Coxeter groups and provided a complete characterization of the $\A$-separability and the $\mS$-separability of convex-cocompact subgroups.

    We now investigate the residual properties and the $\C$-separability of free products of groups. 
    It is well known that both of the residually finiteness and LERFness are preserved under taking free products, see \cite{Ro} and \cite{Bur}. In 1984, Tamburini and Wilson \cite{TW} studied the residually alternating property of the free products of residually finite groups. 
    \begin{thm}\cite[Theorem 1]{TW}\label{tw}
        Let $G$ be a free product of at least two nontrivial residually finite groups. Then $G$ is residually alternating unless $G$ is the infinite dihedral group $\Z/2\Z\ast\Z/2\Z$. 
    \end{thm}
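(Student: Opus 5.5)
Since $G$ is residually finite and $\mathbb{Z}/2\mathbb{Z}\ast\mathbb{Z}/2\mathbb{Z}$ is excluded, I will look for, for each nontrivial $g\in G=A\ast B$, a finite alternating quotient of $G$ in which $g$ survives. I will build it by letting $G$ act on a large finite set $\Omega$.

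\textbf{Reduction to finite free products.} Write $g$ in normal form $g=a_1b_1\cdots a_mb_m$, with the first or last letter possibly missing. Each $a_i$ lies in $A\setminus\{1\}$ and each $b_i$ in $B\setminus\{1\}$. By residual finiteness of $A$ and $B$, choose finite quotients $\alpha\colon A\to\bar A$ and $\beta\colon B\to\bar B$ that are injective on the finitely many letters of $g$. If $A$ and $B$ have several factors, group them into two factors, which is again a free product of residually finite groups. The image of $g$ in $\bar A\ast\bar B$ is then nontrivial and cyclically reduced in normal form. It therefore suffices to treat $\bar A\ast\bar B$ with $\bar A,\bar B$ finite and nontrivial, not both of order $2$. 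We may enlarge $\bar A$ or $\bar B$ by taking a product with another finite quotient if needed.

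\textbf{Permutation representation.} Let $\bar A$ act on $\Omega=\{1,\dots,N\}$ as a disjoint union of copies of its regular action, plus fixed points if convenient, and let $\bar B$ act in the same way through an independently chosen identification. This gives a homomorphism $\rho\colon \bar A\ast\bar B\to \mathrm{Sym}(N)$. I then need two properties of $\rho$.

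First, $\rho(g)\ne 1$. Follow a point along the word: choose a path $x_0\mapsto x_1\mapsto\cdots$ in which successive letters move points to fresh regular orbits. Concretely, glue the $\bar A$- and $\bar B$-regular orbits along a tree-like pattern, as in the Bass--Serre tree truncated to a finite piece. Then closing up the remaining points still gives genuine actions of $\bar A$ and $\bar B$, and $x_0$ is moved by $g$. This is a standard covering-space or Stallings-graph argument.

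Second, $\rho$ is onto $A_N$ or $S_N$, or at least its image contains $A_N$. Here I use the freedom in the remaining gluing. Transitivity comes from connecting everything. Primitivity, and then the full alternating group, follow from Jordan's theorem: a primitive group containing a $p$-cycle with $p\le N-3$ prime contains $A_N$. To produce such a cycle, arrange a product such as $\rho(ab)$ for suitable letters to have cycle type containing one cycle of prime length $p$ and other cycles of length coprime to $p$. A suitable power is then a $p$-cycle. The hypothesis excluding $\mathbb{Z}/2\ast\mathbb{Z}/2$ is what allows $ab$ to have a long cycle of controllable length. If $|\bar A|\ge 3$, regular orbits of $\bar A$ overlap with involutions or elements of $\bar B$ to give varied cycle lengths.

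Finally, restrict to the index-$\le2$ subgroup: compose with the inclusion into $A_N$ if we use even permutations. Each regular orbit gives an odd permutation only for elements of even order acting as product of an odd number of even cycles. Doubling all copies makes every $\rho(x)$ even, so the image equals $A_N$ and $g$ survives.

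The main obstacle is the simultaneous control of primitivity and of the prime cycle while keeping $\rho(g)\neq1$ and genuine group actions. I would first try explicit combinatorics with $N$ large and the prime $p$ between $N/2$ and $N-3$, via Bertrand's postulate. The degenerate cases $\bar A\cong\bar B\cong\mathbb{Z}/2\times$ small groups need separate care, handled by enlarging one factor's quotient.
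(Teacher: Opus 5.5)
The paper gives no proof of this statement; it is quoted from Tamburini and Wilson. Judged on its own, your argument has a genuine gap exactly where the theorem has content.

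Two of your steps are fine. The reduction to $\bar A\ast\bar B$ with $\bar A,\bar B$ finite and not both of order $2$ works: it uses residual finiteness of the factors, Gruenberg's theorem when you group factors, and the fact that an infinite residually finite factor has arbitrarily large finite quotients. Making $g$ act nontrivially also works, by realising the finite piece of the Bass--Serre tree traced by $g$ as regular orbits, with both factors acting trivially on the remaining points.

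The decisive claim, however, is only asserted. You say the remaining gluing can be chosen so that some word such as $\rho(ab)$ has exactly one cycle of prime length $p\in(N/2,N-3]$ and all other cycle lengths prime to $p$, while the $g$-piece stays embedded and the action is transitive. You give no gluing pattern and no cycle-type computation. You also never actually use the exclusion of $\Z/2\Z\ast\Z/2\Z$, and any correct construction must use it, since every finite quotient of $\Z/2\Z\ast\Z/2\Z$ is a (possibly degenerate) dihedral group.

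Nor can you borrow the device the paper uses for its own theorem. There, Wilton's chain $W_n$ in Lemma \ref{alternating embedding} lets the free generator $x_2$ act trivially on most vertices, so it has bounded support, and this is combined with a prime number of vertices. Here there is no free factor. For instance, in $\Z/2\Z\ast\Z/3\Z$, connectivity of the Schreier graph of a transitive action of degree $N$ forces every nontrivial element of either factor to move at least $N/2-2$ points. So the element with controlled cycle structure has to be a genuinely mixed word. Building explicit coset diagrams in which its cycle type is forced is the real content of the proof, and that is what is missing.

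Two further steps are wrong as written.

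\emph{Primitivity.} Primitivity is a hypothesis of Jordan's theorem, not a consequence of it. You need either $N$ prime, as in Lemma \ref{Jordan for p}, or the block argument that a transitive group containing a $p$-cycle with $p>N/2$ is primitive. The argument: such a cycle either fixes every block, and then lies in a single block of size at most $N/2$; or it permutes $p$ blocks of size at least $2$, and so moves at least $2p>N$ points.

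\emph{Parity.} ``Doubling all copies'' to make every $\rho(x)$ even cannot mean taking two disjoint copies of $\Omega$, because that destroys transitivity. Evenness (for example, an even number of regular orbits for each factor) has to be imposed inside the same construction that produces the $p$-cycle and carries $g$. This is not cosmetic: the paper's own method only reaches $\A\cup\mS$-quotients, and the authors remark that they have no uniform way to force the quotient to be alternating.
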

    
    Note that LERF groups are residually finite, and the trivial subgroup of $G$ is $\A$-separable implies that $G$ is residually alternating. It is natural to ask the following question. 

    \begin{ques}
        Which kinds of subgroups are $\A$-separable or $\mS$-separable in free products of LERF groups?
    \end{ques}

     By combining the methods in \cite{Gi, Wi}, we show that there exists a class of $\A \cup \mS$-separable subgroups in the free product of a LERF group with a free group $F_r$ of rank $r>1$.

    \begin{thm}\label{main2}
    Let $G$ be a LERF group and let $H\leq F_r\ast G$ be a finitely generated subgroup. Then $H$ is $\A \cup \mS$-separable in $F_r\ast G$ if one of the following holds:
    \begin{enumerate}
        \item $H \cap F_r^{\gamma}=1$ for every $\gamma \in F_r \ast G$, or
        \item $H \cap F_r^{\gamma} \neq 1$ has infinite index in $F_r^{\gamma}$ for some $\gamma \in F_r \ast G$.
    \end{enumerate}
    \end{thm}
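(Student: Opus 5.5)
We will realize $F_r\ast G$ as the fundamental group of a graph of spaces: a rose $R$ with $r$ petals (for $F_r$) wedged to a space $X_G$ with $\pi_1(X_G)=G$. A finite subgroup $H$ is then represented by a finite \emph{precover} in the sense of Gitik \cite{Gi}. This is a finite labelled graph $\Gamma_H$ built from finitely many $F_r$-pieces (finite Stallings graphs, i.e.\ immersions to $R$) and finitely many $G$-pieces (finite-sheeted covers, or rather finite cosets, of subgroups of $G$). The pieces are glued along basepoints, and $\Gamma_H$ immerses into the relative Cayley graph of $F_r\ast G$ with respect to $G$.

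Given $\gamma_1,\dots,\gamma_n\notin H$, we first enlarge $\Gamma_H$ by the paths reading the normal forms of the $\gamma_i$ from the basepoint. Since $\gamma_i\notin H$, these paths end at vertices different from the basepoint. Next we use LERFness of $G$ to complete each $G$-piece to a finite $G$-set. Each $G$-piece corresponds to a finitely generated subgroup $K\le G$ together with finitely many cosets of $K$ that must stay distinct. Since $G$ is LERF, there is a finite-index subgroup $K'\supseteq K$ separating these cosets, and we replace the piece by the finite $G$-set $G/K'$, or a suitable disjoint union of such sets. This step follows the standard proof that LERF is preserved under free products \cite{Bur}. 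We now have a finite set $V$ with a $G$-action, and partial injections on $V$ for the $r$ free generators coming from the $F_r$-pieces. Any extension of these partial injections to permutations of $V$ gives an action $f\colon F_r\ast G\to \mathrm{Sym}(V)$ in which the basepoint stabilizer contains $H$ but not any $\gamma_i$. Hence $f(\gamma_i)\notin f(H)$, since $f(H)$ fixes the basepoint and $f(\gamma_i)$ does not.

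The remaining task, which is the main obstacle, is to choose the extension so that the image $f(F_r\ast G)$ is all of $A_{|V|}$ or $S_{|V|}$. Here the hypotheses on $H$ enter, exactly as in Wilton's proof of Theorem~\ref{F is LERA}. In case (1), and in case (2) after conjugating so that $\gamma=1$, the $F_r$-part of the precover at some vertex is not a covering: under (2) the $F_r$-piece for $H\cap F_r$ is a finite Stallings graph of an infinite-index subgroup, so it has a missing edge; under (1) every $F_r$-piece is a tree. Thus the generators of $F_r$ have genuine freedom in how they are completed. We will first pad $V$ by adding extra free orbits of $G$ (copies of $G/N$ for a finite-index normal $N$ given by residual finiteness), which keeps the separation intact. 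This lets us take $|V|$ large and, if needed, prime or of a convenient form.

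We then extend the first free generator $a_1$ so that it acts as a long cycle, or a cycle of prime length $p$ with $|V|/2<p<|V|-2$, on most of $V$. We extend $a_2$ so that the group generated is transitive and primitive; for instance, we make $a_2$ link the remaining points and the $G$-orbits. Jordan's theorem, that a primitive group containing a $p$-cycle with $p\le |V|-3$ contains $A_{|V|}$, then gives $f(F_r\ast G)\supseteq A_{|V|}$, so the image is $A_{|V|}$ or $S_{|V|}$. The delicate points are twofold. First, the already prescribed partial permutations, coming from $\Gamma_H$ and the $\gamma_i$-paths, must occupy a bounded number of points, so that a prime cycle avoiding their constraints still fits; this needs Bertrand-type prime estimates as in \cite{Wi}. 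Second, we must verify primitivity, which we will obtain from 2-transitivity or from a cycle of prime length combined with transitivity.
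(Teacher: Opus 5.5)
Your proposal is correct in outline, and its skeleton matches the paper's. Both arguments use a finite connected $F_r\ast G$-based graph $\Gamma\subset\mathrm{Cay}(F_r\ast G,H)$ carrying loops for the generators of $H$ and non-closed paths for the $\gamma_j$. Both complete the $G$-components to finite covers using LERF: the paper does this with pushouts and Gitik's Lemma~\ref{embedding of precover}, and you do it with finite $G$-sets $G/K'$, which amounts to the same thing. Both use hypotheses (1)/(2) only to produce a vertex of $\Gamma$ where some free generator is unsaturated. Your reason in case (2) is more direct than the paper's route through Kurosh and Markus-Epstein in Lemma~\ref{no cover component 1}: after conjugating, the $X$-component of the basepoint is a finite subgraph of the infinite cover $\mathrm{Cay}(F_r,H\cap F_r)$ of the rose, so it cannot be saturated.

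You genuinely differ from the paper in the last step.
\begin{itemize}
\item The paper glues Wilton's gadgets $W_n,V_s$ (Lemma~\ref{alternating embedding}) onto a non-saturated $F_r$-component so that the final cover has exactly a prime number $p$ of vertices. Transitivity then gives primitivity for free, and the minimal-degree form of Jordan's theorem (Lemma~\ref{Jordan for p}) applies to $x_2$, which is nontrivial but moves fewer than $k+5$ vertices.
\item You leave $|V|$ flexible and give $a_1$ a cycle of prime length $p\in(|V|/2,|V|-3]$. You then get primitivity from transitivity together with $p>|V|/2$, and invoke Jordan's prime-cycle theorem.
\end{itemize}
Your route needs no exact vertex count and no special gadget. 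It only needs a prime in a suitable interval, which is easy because you choose $|V|$ yourself by padding (e.g.\ with $G$-fixed points, i.e.\ $N=G$). The paper's route reuses Wilton's construction essentially verbatim and needs nothing about primes beyond the existence of large ones.

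A few points should be made explicit when you write it up.
\begin{itemize}
\item Jordan's theorem generally does not apply to $a_1$ itself. $\Gamma$ may force closed $a_1$-cycles (e.g.\ if $a_1^2\in H$). Use $a_1^L$ instead, where $L$ is the lcm of the forced cycle lengths; this is a $p$-cycle once $p>|\V(\Gamma)|$.
\item For primitivity: a block system with blocks of size $b$, $1<b<n$, has $n/b\le n/2<p$ blocks. So the $p$-cycle fixes every block, and its support lies in a single block. This forces $b\ge p>n/2$, a contradiction.
\item For transitivity, relabel so that the generator carrying the long cycle is one that is unsaturated at some vertex of $\Gamma$. Put the open $a_1$-chains of $\Gamma$ into the $p$-cycle, and let $a_2$ join the remaining padding points to that cycle.
\item ``Finite subgroup'' should read ``finitely generated subgroup''.
\item The relative Cayley graph is taken with respect to $H$, not $G$.
\end{itemize}
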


   Since every finitely generated infinite-index subgroup of $F_r$ satisfies the hypothesis in Theorem \ref{main2}, the following immediate corollary represents a generalization of Theorem \ref{F is LERA} regarding the $\A$-separability of free groups.

    \begin{cor}\label{cor}
        Any finitely generated infinite-index subgroup of $F_r$ is $\A \cup \mS$-separable in the free product $F_r \ast G$, where $G$ is a LERF group.
    \end{cor}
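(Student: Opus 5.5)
The corollary should follow directly from Theorem~\ref{main2}. I will check that a finitely generated subgroup $H\le F_r$ of infinite index in $F_r$, viewed as a subgroup of $F_r\ast G$, satisfies hypothesis (1) or (2) of that theorem. Split according to whether $H$ is trivial.

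If $H=1$, then $H\cap F_r^{\gamma}=1$ for every $\gamma\in F_r\ast G$, so hypothesis (1) holds. Theorem~\ref{main2} then gives that the trivial subgroup is $\A\cup\mS$-separable. In particular $F_r\ast G$ is residually $\A\cup\mS$, which is consistent with Theorem~\ref{tw}.

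If $H\neq 1$, take $\gamma=1$. Since $H\le F_r$, we have $H\cap F_r^{1}=H\cap F_r=H\neq 1$. By assumption $H$ has infinite index in $F_r=F_r^{1}$, so hypothesis (2) holds with $\gamma=1$. Theorem~\ref{main2} again gives $\A\cup\mS$-separability.

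The only point to confirm is that the ambient group and the finite-generation hypothesis match those of Theorem~\ref{main2}: $H$ is finitely generated as a subgroup of $F_r$, hence as a subgroup of $F_r\ast G$, and $G$ is LERF by assumption. No step here is a genuine obstacle. All the difficulty lies in Theorem~\ref{main2} itself, which presumably combines Wilton's alternating-quotient construction for $F_r$ with precovers and relative Cayley graphs for the LERF factor $G$. The corollary is a direct specialization.
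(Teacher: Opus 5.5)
Your proposal is correct and matches the paper, which deduces the corollary immediately from Theorem~\ref{main2}. Your case split, with $H=1$ satisfying hypothesis (1) and $H\neq 1$ satisfying hypothesis (2) for $\gamma=1$, makes explicit the verification the paper leaves implicit, including the edge case that (2) alone would miss.
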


    Notice that $\A \cup \mS$-separable means the quotient is required to be either an alternating or a symmetric group. Since $G$ is an arbitrary LERF group, no uniform modification method seems to exist that would guarantee the quotient is precisely an alternating group.
    Moreover, the hypothesis ``infinite index" is essential. In fact, the following example shows there exist finite-index subgroups that are not alternating separable in such a free product.

    \begin{exam}
    Let $F_2=\langle a,b \mid - \rangle$, $G=\langle c \mid c^2\rangle$. We consider $$\phi: F_2 \ast G \to \Z/2\Z$$ given by $$\phi(a)=\phi(b)=1, \quad \phi(c)=0.$$
    Then $\ker\phi$ is a finitely generated normal subgroup of $F_2\ast G$ of index 2. For any surjection $f: F_2\ast G \to A_n$, $f(\ker\phi)$ is normal in $A_n$ with index at most 2.  If $n=3$ or $n\geq 5$, then $A_n$ is simple and hence $f(\ker\phi)=A_n$;  if $n=4$, then $f(\ker\phi)$ is again $A_n$ because  the only proper normal subgroup of $A_4$ is of index 3. So $\ker\phi$ is not $\A$-separable.
    \end{exam}

    To prove Theorem \ref{main2}, we will construct a relative Cayley graph of $F_r \ast G$. This graph corresponds to the 1-skeleton of a primary sheeted cover of the representation complex associated with $F_r \ast G$. The action of $F_r \ast G$ on its vertex set will then be used to demonstrate $\A \cup \mS$-separability.
    
    This paper is organized as follows. In Section \ref{pre}, we review some topological tools, such as labeled graphs and precovers, to prepare for proving the main results.  In Section \ref{Wilton}, we review the specific construction of Wilton in \cite{Wi} and bring in Jordan's theorem. In Section \ref{section4}, we introduce Markus-Epstein algorithm to provide a more concrete characterization of the structure of subgroups defined by labeled graphs. Finally, in Section \ref{proof}, we employ Wilton's construction and Markus-Epstein algorithm to prove Theorem \ref{main2}.

    \section{Preliminaries}\label{pre}
    In this section, we review some notions and properties of LERFness and topology of graphs.  Let $G=\langle X \mid R \rangle$ be a group and 
    $$X^\ast:=\{x,x^{-1}| x \in X\}.$$

    \subsection{Labeled graph and embedding}
     A \textit{graph} is a 1-dimensional CW-complex consisting of two sets $\V$ and $\mathrm{E}$ representing the 0-cells and 1-cells respectively and two functions $\mathrm{E} \to \mathrm{E}$ and $\mathrm{E} \to \V$: for each $e \in \mathrm{E}$, there is an element $\bar{e} \in \mathrm{E}$ and an element $\iota(e)\in \mathrm V$. We call \textit{edge} for $e \in \mathrm{E}$ and \textit{vertex} for $v \in \V$, the requirements of $\bar{e}$ are $\bar{\bar{e}}=e$ and $\bar{e} \neq e$. A \textit{map of graph} consists of a pair of functions preserving the structure, which maps edges to edges, vertices to vertices.
     
    A graph $\Gamma$ is called a \textit{labeled graph} (with the group $G=\langle X \mid R \rangle$) if it is equipped with a \textit{labeling function} 
         $$\mathrm{Lab}: \mathrm{E}(\Gamma) \to X^\ast$$ satisfying $\mathrm{Lab}(\bar{e})=\mathrm{Lab}(e)^{-1}$ and the \textit{immersion condition}: $e_1=e_2$ provided $\mathrm{Lab}(e_1)=\mathrm{Lab}(e_2)$ and $\iota(e_1)=\iota(e_2)$. The \textit{label} of a path $p=e_1 e_2\cdots e_n \subset \Gamma$ is defined by 
         $$\mathrm{Lab}(p) \equiv \mathrm{Lab}(e_1)\cdot \mathrm{Lab}(e_2)\cdots\mathrm{Lab}(e_n)\in G.$$
    If in addition, any path $p$ with $\mathrm{Lab}(p)=1$ is closed, then $\Gamma$ is called $G$-\textit{based}. For each vertex $v$ and each word $g \equiv x_1 x_2 \cdots x_n$ , since $\Gamma$ is a labeled graph, there is a unique path $p$ in $\Gamma$ originating from $v$ with $\mathrm{Lab}(p) \equiv x_1 x_2 \cdots x_n$. The subgroup
$$\mathrm{Lab}(\Gamma, v):=\{\mathrm{Lab}(p) \mid  p~ \textrm{is a loop in} ~\Gamma \ \textrm{with base point} ~v\}\leq G$$
is called \textit{the subgroup of $G$ determined by the graph $\Gamma$.}

   Let $x \in X^\ast$. A vertex $v \in \V(\Gamma)$ is called $x$-\textit{saturated} if there exists $e\in \mathrm{E}(\Gamma)$ with $\iota(e)=v$ and $\mathrm{Lab}(e)=x$. If $\Gamma$ is $x$-saturated for any $x \in X^\ast$ at any $v \in \V(\Gamma)$, then $\Gamma$ is called $X^\ast$-\textit{saturated}.
    
    A map $\pi: \Gamma \to \Delta$ of two labeled graphs is an \textit{immersion} if it is injective on the set $\{e \in \mathrm{E}(\Gamma)| \iota(e)=v\}$ of all vertices of $\Gamma$. In addition, if $\pi$ preserves the labels of edges, then it is called a \textit{morphism of labeled graphs}. If $\pi$ is again an injective morphism, then it is called an \textit{embedding}  or $\Gamma$ \textit{embeds} in the graph $\Delta$. 

    \begin{rem}
    A labeled graph is also called a \textit{well-labeled graph} in some literature, see \cite{Ma1} \cite{Ma2}. Any labeled graph of a free group $F_r$ is $F_r$-based. 
    \end{rem}
    
    \subsection{Relative Cayley graph}
    For a group $G=\langle X \mid R \rangle$, we denote the Cayley graph of $G$ with respect to the given generating set $X$ by $\mathrm{Cay}(G)$. Recall that $\mathrm{Cay}(G)$ is an oriented graph with the vertex set $\V(\mathrm{Cay}(G))=G$ and the edge set $\mathrm{\mathrm{E}}(\mathrm{Cay}(G))=G \times X^\ast$. An edge $(g,x) \in \mathrm{E}(\mathrm{Cay}(G))$ originates at $g$ and terminates at $gx$. 
    
    For any subgroup $H\leq G$, we denote by $\mathrm{Cay}(G,H)$ the Cayley graph of $G$ relative to the subgroup $H$ of $G$, which is an oriented graph with vertex set consisting of all the right cosets of $H$:
    $$\V(\mathrm{Cay}(G,H)):=\{Hg\mid g\in G\},$$ 
    and the edge set $\mathrm{E}(\mathrm{Cay}(G,H))=\V(\mathrm{Cay}(G,H)) \times X^\ast$.  We use $\mathrm{Cay}(G,H,H \cdot 1)$ when the base point $H \cdot 1$ is emphasized. An edge $(Hg,x)$ originates from the vertex $Hg$ and ends at $Hgx$. Moreover, a path $p$ originating from $Hg$ must end at $Hg\mathrm{Lab}(p)$. Therefore, we have the following.
    \begin{lem}\label{loop lem in relative Cayley graph}
        Let $H\leq G$ be a subgroup of $G$. A path $p$ originating from the base point $H\cdot 1$ in $\mathrm{Cay}(G,H, H \cdot 1)$ is a loop if and only if $\mathrm{Lab}(p)\in H$. Hence, $$\mathrm{Lab}(\mathrm{Cay}(G,H), H\cdot 1)=H.$$
    \end{lem}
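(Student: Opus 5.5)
The plan is to use only the two facts already written just before the statement: $\V(\mathrm{Cay}(G,H))$ is the set of right cosets $Hg$, and an edge $(Hg,x)$ goes from $Hg$ to $Hgx$. The one step that needs care is proving that the endpoint of a path starting at $Hg$ is $Hg\,\mathrm{Lab}(p)$. I would do this by induction on the length $n$ of a path $p=e_1\cdots e_n$.

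For $n=0$ the path is the vertex $Hg$ and the label is trivial, so there is nothing to prove. For the inductive step, write $p=p'e_n$. By induction, $p'$ ends at $Hg\,\mathrm{Lab}(p')$. The edge $e_n$ starts there and carries a label $x\in X^\ast$, so it ends at $Hg\,\mathrm{Lab}(p')x=Hg\,\mathrm{Lab}(p)$. It is worth checking that this is well defined on cosets. If $Hg=Hg'$, then $Hgx=Hg'x$, because right multiplication by $x$ respects right cosets.

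Now take $g=1$. A path from $H\cdot 1$ ends at $H\,\mathrm{Lab}(p)$, and $H\,\mathrm{Lab}(p)=H$ if and only if $\mathrm{Lab}(p)\in H$. So $p$ is a loop exactly when its label lies in $H$, which is the first claim.

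For the equality $\mathrm{Lab}(\mathrm{Cay}(G,H),H\cdot 1)=H$, the inclusion $\subseteq$ is immediate from the first claim. For $\supseteq$, take $h\in H$ and write it as a word $x_1\cdots x_n$ in $X^\ast$. The relative Cayley graph is $X^\ast$-saturated, since every pair $(Hg,x)$ is an edge. So this word can be read along a path from $H\cdot 1$, one edge at a time. That path has label $h\in H$, so by the first claim it is a loop, and hence $h$ belongs to the left-hand side.

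No step here is a real obstacle. The only subtlety is to treat labels as elements of $G$ rather than as words, so that any word representing $h$ gives a valid loop; this is harmless because the endpoint $Hg\,\mathrm{Lab}(p)$ depends only on the group element.
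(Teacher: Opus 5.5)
Your proof is correct and takes the same approach as the paper. The paper simply notes that a path from $Hg$ ends at $Hg\,\mathrm{Lab}(p)$ and reads off the lemma from that. Your induction on path length, and your $X^\ast$-saturation argument for the inclusion $H\subseteq\mathrm{Lab}(\mathrm{Cay}(G,H),H\cdot 1)$, spell out steps the paper leaves implicit.
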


    The following lemma shows that $G$-based graphs are subgraphs of the relative Cayley graphs.
    \begin{lem}\cite[Lemma 1.5]{Gi}\label{subgraphs of Cayley}
        Let $G=\langle X \mid R \rangle$ be a group and let $(\Gamma,v_0)$ be a graph labeled with $X$. Denote $\mathrm{Lab}(\Gamma,v_0)=H$. Then
        \begin{enumerate}
            \item $\Gamma$ is $G$-based if and only if it can be embedded in $\mathrm{Cay}(G,H,H \cdot 1)$;
            \item $\Gamma$ is $G$-based and $X^\ast$-saturated if and only if it is isomorphic to $\mathrm{Cay}(G,H,H \cdot 1)$.
        \end{enumerate}
    \end{lem}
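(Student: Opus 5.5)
We prove both parts by building the embedding explicitly in one direction and using Lemma \ref{loop lem in relative Cayley graph} in the other. Write $H=\mathrm{Lab}(\Gamma,v_0)$ and $\Delta=\mathrm{Cay}(G,H,H\cdot 1)$.

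\emph{Part (1), necessity.} Suppose $\Gamma$ embeds in $\Delta$ by a label-preserving injective morphism $\pi$. We may assume $\Gamma$ is connected, which is implicit since $H$ is defined via $v_0$, and that $\pi(v_0)=H\cdot 1$. Let $p$ be a path in $\Gamma$ with $\mathrm{Lab}(p)=1$ in $G$, starting at a vertex $u$. Then $\pi(p)$ starts at $\pi(u)=Hg$ and ends at $Hg\,\mathrm{Lab}(p)=Hg$, so $\pi(p)$ is closed. Since $\pi$ is injective on vertices, $p$ is closed. Hence $\Gamma$ is $G$-based.

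\emph{Part (1), sufficiency.} Suppose $\Gamma$ is $G$-based. For each vertex $u$, choose a path $q_u$ from $v_0$ to $u$ and set $\pi(u)=H\,\mathrm{Lab}(q_u)$. Send an edge $e$ with $\iota(e)=u$ and label $x$ to the edge $(\pi(u),x)$.
\begin{itemize}
\item \emph{Well defined.} If $q_u,q_u'$ are two paths from $v_0$ to $u$, then $q_u\overline{q_u'}$ is a loop at $v_0$, so $\mathrm{Lab}(q_u)\mathrm{Lab}(q_u')^{-1}\in H$ and the cosets agree.
\item \emph{Morphism.} The terminus of $e$ receives the coset $H\,\mathrm{Lab}(q_u)x$, since $q_ue$ is a path to it. So $\pi$ respects incidence and labels.
\item \emph{Injective on vertices.} This is where $G$-basedness is used. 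Suppose $H\,\mathrm{Lab}(q_u)=H\,\mathrm{Lab}(q_w)$. Then $\mathrm{Lab}(q_u)\mathrm{Lab}(q_w)^{-1}=h\in H$, and $h=\mathrm{Lab}(\ell)$ for some loop $\ell$ at $v_0$. The path $\overline{q_u}\,\ell\,q_w$ runs from $u$ to $w$ and has label $\mathrm{Lab}(q_u)^{-1}h\,\mathrm{Lab}(q_w)=1$. Since $\Gamma$ is $G$-based, this path is closed, so $u=w$.
\item \emph{Injective on edges.} This follows from injectivity on vertices together with the immersion condition, since an edge is determined by its origin and its label.
\end{itemize}

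\emph{Part (2).} The relative Cayley graph $\Delta$ is $X^\ast$-saturated by construction. It is also $G$-based: a path from $Hg$ with trivial label ends at $Hg$. So if $\Gamma\cong\Delta$, then $\Gamma$ is $G$-based and $X^\ast$-saturated.

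Conversely, suppose $\Gamma$ is $G$-based and $X^\ast$-saturated, and take the embedding $\pi$ from Part (1). It remains to show $\pi$ is surjective. Given a coset $Hg$, write $g$ as a word in $X^\ast$. By saturation, this word can be read as a path starting at $v_0$ in $\Gamma$, and its endpoint maps to $Hg$. So $\pi$ is surjective on vertices. Surjectivity on edges then follows from saturation and the immersion condition, since each vertex of $\Delta$ has exactly one outgoing edge per letter. A bijective morphism is an isomorphism.

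The only delicate step is injectivity of $\pi$ on vertices in Part (1), which requires the concatenation argument $\overline{q_u}\,\ell\,q_w$ through the base point given above.
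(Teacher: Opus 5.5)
The paper gives no proof of this lemma (it is quoted from Gitik), so there is no internal argument to compare against. Your proof is correct and is the standard one:
\begin{enumerate}
\item the coset map $u\mapsto H\,\mathrm{Lab}(q_u)$ is well defined because $H$ is exactly the set of loop labels at $v_0$;
\item it is injective because $G$-basedness forces the trivially labelled path $\overline{q_u}\,\ell\,q_w$ to close up;
\item under $X^\ast$-saturation it is surjective, because every $g\in G$ can be read as a path starting at $v_0$.
\end{enumerate}

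The one point worth stating more carefully is where connectivity enters. In the necessity direction of (1) you need neither connectivity nor the normalization $\pi(v_0)=H\cdot 1$, since any label-preserving map that is injective on vertices already works. Connectivity is, however, essential for sufficiency in (1) and for (2). For example, let $\Gamma$ be the disjoint union of two copies of the one-vertex graph $\mathrm{Cay}(G,G)$. Then $\Gamma$ is $G$-based and $X^\ast$-saturated with $H=G$, yet it has two vertices while $\mathrm{Cay}(G,H,H\cdot 1)$ has one. This hypothesis is harmless here, because the paper only applies the lemma to connected graphs such as monochromatic components.
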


  Note that $\mathrm{Cay}(G)$ and $\mathrm{Cay}(G,H)$ are naturally labeled with the generating set $X$ of $G$, and
    there are natural action of $G$ on $\V(\mathrm{Cay}(G))$ and $\V(\mathrm{Cay}(G, H))$ given by left translation. The relative Cayley graph $\mathrm{Cay}(G,H)$ can be defined as the quotient of $\mathrm{Cay}(G)$ by the action of $H$. Moreover, $\mathrm{Cay}(G,H)$ can be viewed as the 1-skeleton of a covering space of the representation complex of $G$, so we follow the terminology of Gitik \cite{Gi} and refer to the "\textit{cover}" as the \textit{relative Cayley graph}.
        
    Scott \cite{Sc} described the residually finite and LERF properties by the embeddability from subgraphs to covers. Gitik \cite{Gi} rephrased Scott's result as follows.
    
    \begin{thm}\cite[Theorem 1.1]{Gi}\label{scott}
        Let $G=\langle X \mid R \rangle$ be a group.
        \begin{enumerate}
            \item $G$ is residually finite if and only if for any finite tree $\Gamma$, which is a subgraph of the Cayley graph of $G$, there exists a finite-index subgroup $H_0$ of G such that $\Gamma$ can be embedded in $\mathrm{Cay}(G,H_0)$.

             \item $G$ is LERF if and only if for any finitely generated subgroup $H$ of $G$ and for any finite connected subgraph $(\Gamma,H \cdot 1)$ of $\mathrm{Cay}(G,H, H \cdot 1)$, there exists a finite-index subgroup $H_0$ of $G$ such that $(\Gamma,H \cdot 1)$ can be embedded in $\mathrm{Cay}(G,H_0,H_0 \cdot 1)$.
        \end{enumerate}
    \end{thm}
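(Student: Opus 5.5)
The plan is to translate embeddings of labeled graphs into statements about cosets, and then use the definitions of residual finiteness and LERF directly. The basic observation is this. Let $\pi:(\Gamma,H\cdot 1)\to (\mathrm{Cay}(G,H_0),H_0 a)$ be a morphism of labeled graphs with $\Gamma$ connected. Follow a path from the base point labeled by a word $g$; since labels are preserved and the target is a labeled graph, $\pi$ must send the vertex $Hg$ to $H_0ag$. So $\pi$ is determined by where it sends the base point. Moreover, since edges are determined by their initial vertex and label (the immersion condition), $\pi$ is injective on edges as soon as it is injective on vertices. I will use this in both directions of both parts.

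For (2), first assume $G$ is LERF. Fix a finitely generated $H$ and a finite connected subgraph $\Gamma$ of $\mathrm{Cay}(G,H,H\cdot 1)$ with vertices $Hg_1,\dots,Hg_m$, where $g_1=1$. Since these cosets are distinct, $g_ig_j^{-1}\notin H$ for all $i\neq j$. By LERF, for each such pair there is a finite-index subgroup containing $H$ but not $g_ig_j^{-1}$; take $H_0$ to be the intersection of these finitely many subgroups. Then $H\le H_0$, so the map $Hg\mapsto H_0g$ is well defined and label-preserving. By the choice of $H_0$ it is injective on the vertices of $\Gamma$, hence an embedding sending $H\cdot 1$ to $H_0\cdot 1$.

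Conversely, suppose the embedding condition holds, and let $H$ be finitely generated and $\gamma\notin H$. Let $\Gamma$ be the finite subgraph of $\mathrm{Cay}(G,H,H\cdot 1)$ formed by the loops at $H\cdot 1$ reading a finite generating set of $H$, together with a path from $H\cdot 1$ to $H\gamma$. By hypothesis there is a finite-index $H_0$ and a based embedding of $\Gamma$ into $\mathrm{Cay}(G,H_0,H_0\cdot 1)$. Loops map to loops, so by Lemma \ref{loop lem in relative Cayley graph} each generator of $H$ lies in $H_0$, and hence $H\le H_0$. Injectivity gives $H_0\gamma\neq H_0$, so $\gamma\notin H_0$. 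Passing to the finite quotient $G/\mathrm{core}(H_0)$ then separates $\gamma$ from $H$.

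Part (1) works the same way with $H=1$. If $G$ is residually finite and $\Gamma$ is a finite tree in $\mathrm{Cay}(G)$ with vertices $g_i$, choose a finite-index normal subgroup $N$ avoiding the finitely many elements $g_ig_j^{-1}$ with $i\ne j$; then $g\mapsto Ng$ embeds $\Gamma$. Conversely, given $g\neq 1$, take $\Gamma$ to be a geodesic path from $1$ to $g$, which is a tree. An embedding sends $1\mapsto H_0a$ and $g\mapsto H_0ag$, and these images are distinct. Hence $aga^{-1}\notin H_0$, so $g\notin a^{-1}H_0a$, a finite-index subgroup. Its normal core is then a finite-index normal subgroup not containing $g$.

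The main point needing care is the first observation: that morphisms of labeled graphs out of a connected graph are exactly coset maps $Hg\mapsto H_0ag$. In particular, in the LERF direction the base point must go to $H_0\cdot 1$ (which forces $H\le H_0$), whereas in part (1) the base point is free and we must conjugate $H_0$ accordingly.
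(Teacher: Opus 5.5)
The paper gives no proof of this statement; it quotes it from Gitik as a graph-theoretic rephrasing of Scott's criterion. Your proposal is correct and complete as a self-contained proof.

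It is essentially the standard argument behind Scott's lemma, translated into relative Cayley graphs. It rests on two facts:
\begin{enumerate}
\item A label-preserving map out of a connected subgraph of $\mathrm{Cay}(G,H)$ is forced to be the coset map $Hg\mapsto H_0ag$.
\item The immersion condition upgrades injectivity on vertices to injectivity on edges.
\end{enumerate}
For the forward directions you take a finite-index subgroup (normal in (1), containing $H$ in (2)) that avoids the finitely many elements $g_ig_j^{-1}$. For the converses you compare the images of the base point and of the endpoint $H\gamma$ (respectively $g$). Both steps are sound.

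Compared with Scott's topological formulation, your version needs no covering-space theory. Scott asks that compact subsets of the cover $X_H$ project homeomorphically into some finite intermediate cover. You also make explicit where basing matters. In (2) the based embedding forces $H\le H_0$ via Lemma~\ref{loop lem in relative Cayley graph}. In (1) the embedding is unbased, and you correctly handle this by passing to $a^{-1}H_0a$ and then to its normal core.

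Your forward direction of (1) never uses that $\Gamma$ is a tree. So the tree condition in (1) could be replaced by arbitrary finite subgraphs of $\mathrm{Cay}(G)$ without changing the equivalence.
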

    
    \subsection{Precover}
    The notion of precover was defined by Gitik in \cite{Gi} and then actively employed by Markus-Epstein to study algorithmic problems in amalgams of finite groups in \cite{Ma1, Ma2}.

    Let $G_1$ and $G_2$ be two groups with generating sets $X$ and $Y$ respectively such that $X^\ast \cap Y^\ast = \emptyset$, and let $A=G_1 \mathop{\ast} G_2$ be the free product of $G_1$ and $G_2$. 
    \begin{defn}
    Let $\Gamma$ be a graph labeled with $X \cup Y$.
        \begin{enumerate}
            \item A subgraph of $\Gamma$ is called \textit{monochromatic} if it is labeled only with $X$ or only with $Y$. An $X$-\textit{component} (resp. $Y$-\textit{component}) of $\Gamma$ is a maximal connected subgraph of $\Gamma$ labeled with $X$ (resp. $Y$).
       
        \item A vertex $v$ in $\V(\Gamma)$ is called $X$-\textit{monochromatic} (resp. $Y$-\textit{monochromatic}), if all edges connected to $v$ are labeled with $X$ (resp. $Y$). Otherwise, $v$ is called \textit{bichromatic}.

        \item $\Gamma$ is called a \textit{precover} of $A$, if it is $A$-based, and each $X$-component of $\Gamma$ is a cover of $G_1$ and each $Y$-component of $\Gamma$ is a cover of $G_2$.
         \end{enumerate}
    \end{defn}

    There is an equivalent description of the $A$-based graph as follows. 
    \begin{lem}[\cite{Gi}]\label{free product based}
        If $A=G_1 \ast G_2$ is a free product, then any graph labeled with $X \cup Y$ is $A$-based if and only if each $G_i$-component is $G_i$-based for $i=1,2$. In particular, it is $A$-based if each monochromatic component is a cover of the corresponding free factor.
    \end{lem}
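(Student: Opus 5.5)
The claim is that a graph $\Gamma$ labeled with $X\cup Y$ is $A$-based exactly when each of its monochromatic components is based over its own factor. The plan is to prove the two implications separately, using the normal form theorem for the free product $A=G_1\ast G_2$.

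\emph{Necessity.} Suppose $\Gamma$ is $A$-based and let $C$ be an $X$-component. Take a path $p$ in $C$ with $\mathrm{Lab}(p)=1$ in $G_1$. The map $G_1\to A$ is injective, so $\mathrm{Lab}(p)=1$ in $A$ as well. Since $\Gamma$ is $A$-based, $p$ is closed. Hence $C$ is $G_1$-based, and the same argument handles $Y$-components.

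\emph{Sufficiency.} Assume every $G_i$-component is $G_i$-based, and let $p$ be a path in $\Gamma$ with $\mathrm{Lab}(p)=1$ in $A$. Write $p=p_1p_2\cdots p_k$, where each $p_j$ is a maximal monochromatic subpath, so consecutive pieces alternate between $X$ and $Y$. Each $p_j$ lies in a single monochromatic component. Let $g_j\in G_{i_j}$ be the label of $p_j$.

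We induct on $k$. If $k=1$, then $p$ lies in one component. Because $G_{i}\hookrightarrow A$, its label is trivial in $G_{i}$, and $G_i$-basedness makes $p$ closed.

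If $k\ge 2$, consider the product $g_1g_2\cdots g_k=1$. The factors $g_j$ alternate between $G_1$ and $G_2$. By the normal form theorem, an alternating product of nontrivial elements with $k\ge 1$ factors is never trivial, so some $g_j=1$.

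Suppose first that $j$ is interior, $1<j<k$. Then $p_j$ lies in a $G_{i_j}$-component and has trivial label, so it is a closed path. Deleting $p_j$ leaves a path in $\Gamma$ with the same endpoints. Its pieces $p_{j-1}$ and $p_{j+1}$ share a color and now meet at a common vertex, so they merge into one monochromatic subpath lying in a single component. The new path has the same trivial label in $A$ and at most $k-2$ monochromatic pieces. By induction it is closed, and therefore $p$ is closed.

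Suppose instead that $j=1$ or $j=k$. Then $p_j$ is closed, and removing it gives a path with the same endpoints and $k-1$ pieces; again induction applies. The only point needing care is that $p_j$ closed means its start and end vertices coincide, so the shortened path is still an honest path with the same endpoints.

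\emph{The ``in particular''.} A cover of $G_i$ is a relative Cayley graph $\mathrm{Cay}(G_i,K)$. By Lemma \ref{loop lem in relative Cayley graph}, in such a graph a path with trivial label starting at $Kg$ ends at $Kg\cdot 1=Kg$, so it is closed. Thus each cover of $G_i$ is $G_i$-based, and the first part gives that $\Gamma$ is $A$-based.

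The main obstacle is the reduction step in sufficiency: invoking the normal form theorem correctly to locate a trivial syllable, and checking that after deleting it the neighbouring pieces truly lie in one component. The latter holds because they are joined at a vertex by edges of the same color.
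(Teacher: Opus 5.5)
Your proof is correct. The paper gives no argument for this lemma and only cites \cite{Gi}; your induction on the number of maximal monochromatic syllables is the standard argument, and it is the same reduction the paper runs in the proof of Lemma~\ref{p in component}: use the normal form theorem to find a trivially labeled syllable, close it via $G_i$-basedness, delete it, and merge its neighbours.
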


    \begin{lem}\label{p in component}
       Let $\Gamma$ be a finite, connected, $A$-based graph labeled with $X \cup Y$ and $\Gamma_1$ be an $X$-component. Let $P$ be a loop in $\Gamma$ starting from $v_1 \in \Gamma_1$. If $1 \neq \mathrm{Lab}(P) \in G_1$, then there is a loop $P'$ in $\Gamma_1$ starting from $v_1$ such that $\mathrm{Lab}(P')=\mathrm{Lab}(P)$.
    \end{lem}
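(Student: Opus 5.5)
The argument rests on normal forms in the free product $A=G_1\ast G_2$, combined with the $A$-basedness of $\Gamma$ (Lemma \ref{free product based}). Write the loop $P$ as a concatenation of maximal monochromatic subpaths,
$$P=p_1p_2\cdots p_m,$$
where consecutive pieces lie in components of different colours. Each $p_j$ lies in a single $X$- or $Y$-component, so $\mathrm{Lab}(p_j)\in G_1$ or $G_2$. We induct on $m$, or on the total length of $P$, and shorten $P$ without changing its endpoints or its label until it lies entirely in $\Gamma_1$.

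\textbf{Reduction step.} Suppose some piece $p_j$ with $1<j<m$ has trivial label, or $p_1$ or $p_m$ has trivial label. The component containing $p_j$ is $G_i$-based by Lemma \ref{free product based}, so $p_j$ is a closed path. Deleting it leaves a path from $v_1$ to $v_1$ with the same label, and the neighbouring pieces $p_{j-1}$ and $p_{j+1}$ now meet at a common vertex. If they have the same colour, they lie in the same monochromatic component, since that component is a maximal connected subgraph of that colour and both pieces touch the shared vertex. We then merge them into one piece, so $m$ strictly decreases. Iterating, we obtain a loop $Q=q_1\cdots q_k$ based at $v_1$ in which every piece has nontrivial label, consecutive pieces alternate in colour, and $\mathrm{Lab}(Q)=\mathrm{Lab}(P)$.

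\textbf{Normal form step.} By the normal form theorem for free products, if $k\ge 2$ then $\mathrm{Lab}(q_1)\cdots\mathrm{Lab}(q_k)$ is a reduced alternating word of syllable length $k$. Such an element does not lie in $G_1$, unless $k=1$. (When $k\geq2$ the element has syllable length at least $2$, while nontrivial elements of $G_1$ have syllable length $1$.) Since $1\neq\mathrm{Lab}(P)\in G_1$, we get $k=1$. Then $Q=q_1$ is a single monochromatic path starting at $v_1\in\Gamma_1$ with label in $G_1\setminus\{1\}$. It is therefore $X$-labelled and lies in the $X$-component through $v_1$, which is $\Gamma_1$. It is a loop because $P$ is. Taking $P'=Q$ finishes the proof. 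The case $k=0$ cannot occur, since the label is nontrivial.

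\textbf{Main obstacle.} The delicate point is the bookkeeping in the reduction step: making sure that deleting a trivially-labelled closed piece keeps the path connected and based at $v_1$. This holds because the piece is closed; in particular, if $p_1$ is removed, the new start is still $v_1$. Merging same-colour neighbours is legitimate by maximality of components. Termination follows because each operation decreases the number of pieces. Finiteness of $\Gamma$ is not really needed beyond ensuring that paths are finite.
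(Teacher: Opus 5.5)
Your proof is correct and follows the paper's strategy: split $P$ into maximal monochromatic pieces, delete the trivially labelled pieces (closed by $A$-basedness), and merge the neighbouring pieces, which then lie in the same component. Your version is more careful than the paper's. The paper asserts outright that every $Y$-piece has trivial label, which fails as stated, since a trivially labelled $X$-piece can separate two $Y$-pieces with cancelling labels; your iterated deletion of trivially labelled pieces of either colour, followed by the normal form theorem, is what justifies reducing to a single $X$-piece in $\Gamma_1$.
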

    \begin{proof}
        We denote $P = P_1P_2 \cdots P_n$, each $P_j(1\leq j\leq n)$ is a maximal connected subpath labeled only with $X$ or $Y$. Then $\mathrm{Lab}(P_j) \in G_i$ for some $i\in {1, 2}$. Moreover, since $$\mathrm{Lab}(P) =\mathrm{Lab}(P_1)\mathrm{Lab}(P_2)\cdots \mathrm{Lab}(P_n)  \in G_1$$ and $A$ is a free product of $G_1$ and $G_2$, we have $\mathrm{Lab}(P_i)=1$ for each $P_i$ labeled with $Y$. Then $P_i$ must be closed because $\Gamma$ is $A$-based. Moreover, as $P_i$ is maximal, both $P_{i-1}$ and $P_{i+1}$ are contained in $X$-components with 
        $$\tau(P_{i-1})=\iota(P_{i})=\tau(P_{i})=\iota(P_{i+1}).$$ It implies $P_{i-1}$ and $P_{i+1}$ are contained in the same $X$-component. So we can remove $P_i$ and combine $P_{i-1}$ with $P_{i+1}$ as a new monochromatic component. After removing all such $P_i$, we obtain a loop $P'$ that starts from $v_1$ and is contained in the $X$-component $\Gamma_1$, such that $\mathrm{Lab}(P')=\mathrm{Lab}(P)$.
    \end{proof}
    
    The precover of free products has a key property. 
    \begin{lem}\cite[Lemma 2.4]{Gi}\label{embedding of precover}
        For any groups $G_1=\langle X \mid R \rangle$ and $G_2=\langle Y \mid T \rangle$, each precover of $G_1 \ast G_2$ can be embedded in a cover of $G_1\ast G_2$ with the same set of vertices.
    \end{lem}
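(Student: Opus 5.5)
The plan is to build the cover directly by adding edges, and then let the defining property of covers (every monochromatic component is a cover of its factor) give $A$-basedness through Lemma \ref{free product based}.

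Let $\Gamma$ be a precover of $A=G_1\ast G_2$. Each $X$-component of $\Gamma$ is a cover of $G_1$, so it is $X^\ast$-saturated. Each $Y$-component is likewise $Y^\ast$-saturated. The only problem is at the vertices that fail to be saturated for the other colour. Let
\[
V_X=\{v\in \V(\Gamma)\mid v \text{ lies in an } X\text{-component}\},
\]
and define $V_Y$ similarly. A vertex outside $V_X$ is $Y$-monochromatic, and it carries no $X$-edges at all. For every $v\in \V(\Gamma)\setminus V_X$, I would attach at $v$ a copy of the Cayley graph $\mathrm{Cay}(G_1)$ with vertex set identified so as to add no new vertices. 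That is impossible unless $G_1$ is trivial, so vertices must be grouped instead.

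The cleaner route is to realise a cover with the same vertex set as a permutation action. Put $V=\V(\Gamma)$. We need right actions of $G_1$ and of $G_2$ on $V$ whose Schreier graphs contain the existing $X$- and $Y$-edges. On each $X$-component $C$, the covering structure already gives a transitive right action of $G_1$ on $\V(C)$. On the set $V\setminus V_X$ of remaining vertices, $G_1$ acts trivially: every generator $x\in X$ fixes each such vertex and gets a loop labelled $x$ there. This is a genuine action, since the trivial action is a homomorphism $G_1\to \mathrm{Sym}(V\setminus V_X)$. Its Schreier graph components are single vertices with loops, namely $\mathrm{Cay}(G_1,G_1)$, which is a cover of $G_1$. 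Treat $G_2$ on $V\setminus V_Y$ in the same way.

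This yields a homomorphism $A=G_1\ast G_2\to \mathrm{Sym}(V)$ by the universal property of free products. Its Schreier graph $\Delta$, labelled by $X\cup Y$, has the following properties:
\begin{enumerate}
\item it contains $\Gamma$ as a subgraph on the same vertex set;
\item it is $(X\cup Y)^\ast$-saturated;
\item it is $A$-based, because every monochromatic component is a cover of its factor (Lemma \ref{free product based}).
\end{enumerate}
For the immersion condition, note that at a vertex of $V\setminus V_X$ there were no $X$-edges before, so the added loops cause no conflict.

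Restrict to the connected component $\Delta_0$ of $\Delta$ containing $\Gamma$; this is possible because $\Gamma$ is connected, and if it is not we apply the argument componentwise. Then Lemma \ref{subgraphs of Cayley}(2) shows that $\Delta_0\cong \mathrm{Cay}(A,K,K\cdot 1)$, where $K=\mathrm{Lab}(\Delta_0,v_0)$. So $\Delta_0$ is a cover of $A$ with the same vertex set as $\Gamma$, and $\Gamma$ embeds in it.

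The main obstacle is to check that the added loops do not break the existing edge structure or basedness. This reduces to one observation. An $X$-edge at $v$ exists in $\Gamma$ exactly when $v\in V_X$, because $X$-components consist of $X$-edges. Hence the $X$-structure is either already a full cover or entirely absent at each vertex, and filling in the trivial action is consistent.
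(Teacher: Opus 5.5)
Your argument is correct. It is the standard proof of \cite[Lemma 2.4]{Gi}, which the paper cites without proof: at every vertex that has no edges of one colour you attach the one-vertex cover $\mathrm{Cay}(G_i,G_i)$ (a loop for each generator, i.e.\ your trivial action), and the resulting saturated graph, all of whose monochromatic components are covers, is $A$-based by Lemma \ref{free product based} and hence a cover by Lemma \ref{subgraphs of Cayley}(2). For the write-up, delete the abandoned first attempt with copies of $\mathrm{Cay}(G_1)$, and note that $\Delta_0=\Delta$ when $\Gamma$ is connected, because $\Delta$ contains $\Gamma$ and has the same vertex set.
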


 \subsection{Pushout and folding}

    Pushout is an effective way to construct a new graph from old graphs considered by Stallings \cite{St}. For example, in the category of labeled graphs, if $\alpha_1:\Gamma \to \Delta_1$ and $\alpha_2:\Gamma \to \Delta_2$ are injective maps, then the amalgam of $\Delta_1$ and $\Delta_2$ denoted by $\Delta_1 \mathop{\ast}\limits_{\Gamma} \Delta_2$ is given by the pushout of $\alpha_1$ and $\alpha_2$:
    $$\begin{tikzcd}
     \Gamma  \arrow[r, "\alpha_1"] \arrow[d, "\alpha_2"]  &  \Delta_1 \arrow[d] \\
     \Delta_2 \arrow[r]  &  \Delta_1 \mathop{\ast}\limits_{\Gamma} \Delta_2
    \end{tikzcd}$$
    In general, none of the graphs need be connected and none of the maps need be injective, moreover, pushouts do not always exist. The pushout of $\alpha_1:\Gamma \to \Delta_1$ and $\alpha_2:\Gamma \to \Delta_2$ exists if and only if there are orientations of $\Gamma$, $\Delta_1$ and $\Delta_2$ which are preserved by $\alpha_1$ and $\alpha_2$. We only consider the case that both $\alpha_1$ and $\alpha_2$ are injective maps, the amalgam can be constructed by performing the foldings.

   Let $f: \mathrm{E}(\Gamma) \to X^\ast$ be a function on the graph $\Gamma$ such that $f(\bar{e})=(f(e))^{-1}$. For any two distinct edges $e_1, e_2\in \mathrm{E}(\Gamma)$ with $\iota(e_1)=\iota(e_2)$ and $e_1 \neq \bar{e}_2$, if $f(e_1)=f(e_2)$, then the \textit{folding} $(e_1,e_2)$ is the projection of $\Gamma$ onto the quotient graph $\Gamma_1$ which is constructed by identifying $e_1$ with $e_2$ and $\tau(e_1)$ with $\tau(e_2)$. After performing a finite sequence of foldings as much as possible, the corresponding graph of $\Gamma$ is indeed a labeled graph. It can be easily seen that the above amalgam of $\alpha_1$ and $\alpha_2$ can be constructed by taking the disjoint union of graphs, identifying $\alpha_1(\Gamma)$ with $\alpha_2(\Gamma)$ and then performing finite foldings until a labeled graph is obtained.

    \section{Wilton's proof of free groups}\label{Wilton}
    In this section, we recall the specific structure of Wilton's proof of that any finitely generated infinite-index subgroup of a free group with rank greater than one is $\A$-separable.

    \subsection{Jordan's theorem}
An action of a group $H$ on  $\{1,2,\ldots,n\}$ is called \textit{symmetric} (resp. \textit{alternating}) if the image of $H\to S_n$ is $S_n$ (resp. $A_n$).   A subgroup $G \leq S_n$ is called \textit{transitive} if it acts transitively on $\{1,2,\ldots,n\}$, and called \textit{primitive} if in addition, it does not preserve any nontrivial partition.

    The main tool for proving Lemma \ref{alternating embedding} is the following Jordan's theorem,  which gives a criterion for an action to be symmetric or alternating.
    
    \begin{thm}[Jordan's theorem, \cite{DM}]\label{Jordan}
        For each $k > 2$ there exists $N$ such that if $n > N$, $G \leq S_n$ is a primitive subgroup and there exists $\gamma \in G \setminus \{e\} $, which moves less than $k$ elements, then $G=S_n$ or $A_n$.
    \end{thm}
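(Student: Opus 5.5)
The statement is Jordan's classical theorem, which the paper cites from \cite{DM}. I would prove it in two stages. First, show that for $n$ large relative to $k$ the group $G$ contains a $3$-cycle. Second, show that a primitive group containing a $3$-cycle contains $A_n$; then $G$ is $A_n$ or $S_n$.

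\emph{Second stage (standard, so I do it first).} Let $G\le S_n$ be primitive and contain a $3$-cycle. Let $\Delta\subseteq\{1,\dots,n\}$ be a maximal subset such that the $3$-cycles of $G$ supported in $\Delta$ generate $\mathrm{Alt}(\Delta)$; this is nonempty since the support of one $3$-cycle qualifies. If $\Delta\ne\{1,\dots,n\}$, primitivity gives $g\in G$ with $\Delta^g\ne\Delta$ and $\Delta^g\cap\Delta\neq\emptyset$. Otherwise the distinct translates of $\Delta$ would be pairwise disjoint and form a nontrivial block system: they are not singletons since $|\Delta|\ge 3$, and transitivity together with $\Delta\ne\{1,\dots,n\}$ gives more than one translate. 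Two alternating groups on overlapping sets of size at least $3$ generate the alternating group on the union. This is an elementary computation with $3$-cycles when one set contributes a new point adjacent to the other. Hence $\Delta\cup\Delta^g$ contradicts maximality, so $\Delta$ is everything and $A_n\le G$.

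\emph{First stage.} Let $m<k$ be the minimal degree of $G$, and fix $x\in G\setminus\{e\}$ with support $S$, $|S|=m$. The key elementary fact is this: if $y\in G$ has support $T$ with $|S\cap T|=1$, then the commutator $[x,y]$ is a $3$-cycle. So it suffices to find a conjugate $x^h$ whose support meets $S$ in exactly one point.

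To find it, consider the family $\{S^h:h\in G\}$ of $m$-subsets; transitivity makes it cover all points. Suppose no two members meet in exactly one point. For distinct overlapping supports with $|S\cap S^h|\ge 2$, I would compare $x$ with conjugates and their commutators and use minimality of $m$ to force strong coincidences between the supports. The hoped-for consequence is that the relation ``$\alpha,\beta$ lie in a common support'' generates a $G$-invariant equivalence relation whose classes have size bounded in terms of $m$. Since $G$ is primitive, that relation must be trivial or universal. Triviality is impossible because $m\ge2$. Universality bounds $n$ by some function $N(m)\le N(k)$. Hence for $n>N$ we obtain the conjugate we want, and so a $3$-cycle.

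\emph{Main obstacle.} I expect the bound in the first stage to be the hard step, namely that a primitive group without $A_n$ has minimal degree growing with $n$. If the combinatorial argument above does not close, I would fall back on Jordan's original approach via Jordan sets. One shows that the complement of a minimal support is a Jordan set, which forces $G$ to be highly transitive once $n$ is large relative to $m$. Then any point stabiliser acts transitively on the remaining points, and one can move $S$ so that it meets itself in a single point. Either route ends with the $3$-cycle and hence the second stage.
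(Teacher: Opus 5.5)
Your second stage is fine, and so is the reduction: if some conjugate $x^h$ has support meeting $S$ in exactly one point, then $[x,x^h]$ is a $3$-cycle. The gap is in the first stage, which carries the entire content of Jordan's theorem. You need to show that such a conjugate exists once $n>N(m)$, and you give no argument for this.

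For primitive $G$, the equivalence relation generated by ``lying in a common support'' is $G$-invariant and nontrivial, so it is automatically all of $\Omega=\{1,\dots,n\}$. Hence ``classes of size bounded in terms of $m$'' is literally the inequality $n\le N(m)$ you are trying to prove. Your plan also uses primitivity only at that last step, yet the intermediate claim is false for merely transitive groups. Take $S_k\times S_2$ ($k\ge3$) acting on $\{1,\dots,k\}\times\{1,2\}$. It has minimal degree $4$, and distinct minimal supports $\{i,j\}\times\{1,2\}$ meet in $0$ or $2$ points. Yet these supports link up all $2k$ points. So no argument built only from conjugates, commutators and minimality of $m$ can produce the bound; primitivity has to enter the combinatorics itself. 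What minimality gives cheaply is that $x$ and $x^g$ commute whenever $3|S\cap S^g|<m$, because $|\mathrm{supp}[x,y]|\le 3|S\cap T|$. Turning that into a bound on $n$ needs a genuine counting argument, which is missing.

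The fallback rests on a false claim: the complement of a minimal support need not be a Jordan set. For $k\ge5$, $S_k$ acting on $2$-subsets is primitive. The transposition $(1\,2)$ attains the minimal degree $2(k-2)$, with support $S=\{\{1,j\},\{2,j\}:3\le j\le k\}$. The pointwise stabiliser of $S$ is trivial, so it is not transitive on $\Omega\setminus S$. The other reading fails too: the pointwise stabiliser of $\Omega\setminus S$ is $\langle(1\,2)\rangle$, which is not transitive on $S$. Restricting the claim to ``$n$ large relative to $m$'' just restates the theorem.

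There is a further problem with the last step. Transitivity of point stabilisers would not let you move $S$ to meet itself in a single point: $\mathrm{PGL}(2,q)$ is $3$-transitive with minimal degree $n-2$. You would need roughly $m$-transitivity.

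To close the gap you must prove or import a real lower bound on the minimal degree of primitive groups not containing $A_n$. Options are Jordan's original bound, or the bound $(\sqrt{n}-1)/2$ of Babai (uniprimitive case) combined with Bochert's theorem ($2$-transitive case). This is what Dixon--Mortimer supply; the paper itself does not prove the statement but simply cites it.
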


   Note that if $G \leq S_n$ is transitive, then the cardinality of any partition preserved by $G$ divides $n$. Therefore, if $p$ is prime,  then $G\leq S_p$ is primitive if and only if $G$ is transitive. We have an immediate corollary of Theorem \ref{Jordan} as follows. 

    \begin{lem}\label{Jordan for p}
        For each $k > 2$, there exists $N$ such that for any prime $p>N$ and any transitive subgroup $G \leq S_p$ if there exists $\gamma \in G \setminus \{e\} $, which moves less than $k$ elements, then $G=S_p$ or $A_p$.
    \end{lem}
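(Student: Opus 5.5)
The plan is to deduce Lemma \ref{Jordan for p} directly from Theorem \ref{Jordan}. The only new ingredient is that for a prime degree, transitivity already forces primitivity.

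Fix $k>2$ and let $N$ be the constant given by Theorem \ref{Jordan} for this $k$. We show that the same $N$ works for the lemma. Let $p>N$ be prime, let $G\le S_p$ be transitive, and let $\gamma\in G\setminus\{e\}$ move fewer than $k$ points.

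First we check that $G$ is primitive. Suppose $G$ preserves a partition $\mathcal{B}=\{B_1,\dots,B_m\}$ of $\{1,\dots,p\}$. By transitivity, for any two blocks $B_i,B_j$ there is some $g\in G$ with $g(B_i)\cap B_j\neq\emptyset$. Since $g$ permutes the blocks, this gives $g(B_i)=B_j$. Hence all blocks have the same size $s$, and $ms=p$. As $p$ is prime, either $s=1$ (the partition into singletons) or $s=p$ (the partition with one block). Both are trivial partitions, so $G$ preserves no nontrivial partition. Being transitive by hypothesis, $G$ is therefore primitive.

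Now every hypothesis of Theorem \ref{Jordan} holds: $p>N$, $G\le S_p$ is primitive, and $\gamma\neq e$ moves fewer than $k$ elements. The theorem gives $G=S_p$ or $G=A_p$.

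There is no real obstacle. The only point needing care is the equal-block-size argument, which the paper's preceding remark takes for granted. It is also worth noting that $N$ depends only on $k$, not on $p$, so the constant from Theorem \ref{Jordan} can be reused verbatim.
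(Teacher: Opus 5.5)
Your proof is correct and follows the paper's route: the paper also notes that the blocks of a partition preserved by a transitive group have equal size dividing the degree, so transitivity in prime degree $p$ forces primitivity, and then it applies Theorem~\ref{Jordan} with the same $N$. Your write-up just supplies the equal-block-size argument that the paper leaves implicit.
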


    \subsection{Finite cover and Wilton's construction}
    
    Let $F_r$ be a free group of rank $r>1$ with the generating set $X=\{x_1,\cdots,x_r\}$. Stallings \cite{St} showed: \textit{any finite graph labeled with $X$ can be embedded into a cover without increasing the number of vertices}. Wilton made a modification in the construction to force the action of $F_r$ on the vertex set of the covering to be alternating or symmetric, see \cite[Lemma 18 and Lemma 19]{Wi}. We summarize them to the following lemma and describe Wilton's specific construction in detail as a proof, which shall be used in our proof of Theorem \ref{main2}.
    \begin{lem}\label{alternating embedding}
        Let $Z$ be a finite, connected graph labeled with $X$.
        If $Z$ is not $X^\ast$-saturated, then there is a finite cover $Y$ of $F_r$ such that $Z$ embeds in $Y$ and the action of $F_r$ on $\V(Y)$ is alternating or symmetric.
    \end{lem}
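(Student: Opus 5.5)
Since $Z$ is not $X^\ast$-saturated, some vertex $v$ is missing an edge with label in $X^\ast$. Replacing $x_j$ by $x_j^{-1}$ if necessary, we may assume it is missing an outgoing $x_1$-edge. Then both the $x_1$-edges and the $x_2$-edges of $Z$ form partial injections on $\V(Z)$; for $x_1$ the partial injection is not a bijection. The plan is to first enlarge $Z$ to a finite connected labeled graph $Z'$ with a prime number $p$ of vertices, with $p$ larger than the constant $N$ of Lemma \ref{Jordan for p}. The parameter $k$ in that lemma will be fixed below. We then complete the partial permutations carefully so that the resulting cover $Y$ has two properties: $F_r$ acts transitively on $\V(Y)$ (automatic, since $Y$ is connected), and some element of $F_r$ acts nontrivially while moving fewer than $k$ points. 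Lemma \ref{Jordan for p} then yields $S_p$ or $A_p$. Every cover produced this way contains $Z'$, hence $Z$, as a subgraph, so $Z$ embeds in $Y$.

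\emph{Step 1: enlarging $Z$.} Let $m=|\V(Z)|$. Choose a prime $p>\max(N,2m+2)$. Starting at the vertex $v$ lacking an outgoing $x_1$-edge, attach a long path of new vertices $v=u_0,u_1,\dots,u_{p-m}$. The first edge is labeled $x_1$ and all later edges are labeled $x_2$. These labels do not conflict: $u_0$ had no outgoing $x_1$, and the new vertices are fresh, so the immersion condition holds. The result $Z'$ is connected, labeled, and has exactly $p$ vertices.

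\emph{Step 2: completing to a cover.} For each generator $x_i$, extend the partial injection on $\V(Z')$ given by the $x_i$-edges to a permutation $\sigma_i$ of $\V(Z')$. This is exactly Stallings' completion, which adds no new vertices. The freedom is in choosing these extensions so as to produce a small-support element.

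The natural candidate for that element is a commutator, or a conjugate, built from generators that act almost trivially. For instance, choose the extension of $x_3$ (or, if $r=2$, a suitable extension of $x_2$) to fix most of the new path vertices. Then an element such as $[x_1, w x_3 w^{-1}]$, for a suitable word $w$, has support contained in a bounded set determined only by $Z$. An alternative is Wilton's trick: arrange for $\sigma_1$ to have a single cycle of prime length $q$ and all its other cycles of lengths coprime to $q$. Then a suitable power $\sigma_1^{M}$ is a $q$-cycle, which moves fewer than $k=q+1$ points and is nontrivial. Concretely, the vertices outside $Z$ can be arranged so that every $x_1$-cycle through them has length $1$ or a controlled length, while the cycles meeting $Z$ have bounded length (at most $m+1$). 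We take $k$ of size roughly $(m+1)!$ and let $\gamma$ be a power of $x_1$ killing all the short cycles except one.

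\emph{Main obstacle.} The hardest part is making this cycle-structure argument rigorous while keeping $Z$ embedded and keeping $p$ prime. The difficulty is that the orbits of $x_1$ meeting $Z$ are determined by the partial data of $Z$, and we must guarantee that some power of $\sigma_1$ is nontrivial with bounded support. The plan is to close the open $x_1$-chain at $v$ using the new vertex $u_1$, making one $x_1$-cycle through $u_1$ whose length $\ell\le m+1$ we may choose among two consecutive values. Every other new vertex is made an $x_1$-fixed point. Then all nontrivial $\sigma_1$-cycles lie within $\V(Z)\cup\{u_1\}$. So $\sigma_1$ itself, which is nontrivial because $v\mapsto u_1$, moves at most $m+1$ points. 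Thus $k=m+2$ suffices, and we can fix $N$ before choosing $p$. Transitivity comes from the connectedness of $Z'$, and primitivity from $p$ being prime. Lemma \ref{Jordan for p} then gives that the action is alternating or symmetric.
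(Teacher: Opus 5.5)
Your final argument, the paragraph headed ``Main obstacle'', is correct. It is a leaner variant of the paper's construction rather than the same one.

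\textbf{What the paper does.} It splices Wilton's gadgets into an $x_1$-chain $a\to W_n\to V_s\to b$. This needs two unsaturated vertices: $a$ missing an outgoing $x_1$-edge and $b$ missing an incoming one. The long path $W_n$ carries $x_i$-loops for $i\ge 2$, so $x_2$ fixes its $n$ vertices. The extra four-vertex gadget $V_s$ is there to guarantee that $x_2$ acts nontrivially. The resulting support bound is $|\V(Z)|+4$.

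\textbf{What you do differently.} You dualize the construction. The long path is labelled by $x_2$, every new vertex except $u_1$ is made $x_1$-fixed, and the small-support element is $x_1$ itself. Its nontriviality is automatic because $\sigma_1(v)=u_1\neq v$. This removes the need for $V_s$ and for the second unsaturated vertex $b$, and it improves the Jordan parameter to $k=m+2$.

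\textbf{What the paper's version buys.} It is literally Wilton's construction, and the sign vector $s$ in $V_s$ lets one flip the parity of each generator's permutation. That matters only if one wants $A_p$ specifically, which this statement does not. Its shape is also reused verbatim in Steps 3--4 of the proof of Theorem \ref{main2}. Your variant would serve there too, since $x_1$ still fixes $u_2,\dots,u_{p-m}$ in the final cover.

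\textbf{Two presentation fixes.}
\begin{enumerate}
\item State explicitly that the remaining open $x_1$-chains of $Z$ are closed up inside $\V(Z)$, for example each onto itself. This is possible because each such chain starts at a vertex with no incoming $x_1$-edge and ends at one with no outgoing $x_1$-edge. It is exactly what makes $\sigma_1$ preserve $\V(Z)\cup\{u_1\}$ and fix everything else.
\item Delete the exploratory material in Step 2: the commutator $[x_1,wx_3w^{-1}]$, the power-of-$\sigma_1$ trick with $k\approx(m+1)!$, and the remark about choosing the cycle length among ``two consecutive values''. None of it is needed, and as written none of it is proved. The ``Main obstacle'' paragraph should simply become Step 2, since it is a complete argument and not an obstacle.
\end{enumerate}
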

    
    \begin{proof}
        Suppose $Z$ has $k$ vertices. Then by Lemma \ref{Jordan for p}, there exists a sufficiently large prime $p$ for $k+5$ such that for any transitive subgroup $G\leq S_p$ if there exists $\gamma \in G \setminus \{e\} $, which moves less than $k+5$ elements, then $G=S_p$ or $A_p$. We shall construct a finite cover $Y$ with $p$ vertices, such that $F_r$ acts on $\V(Y)$ transitively and there is an element of $F_r$ moving less than $k+5$ vertices.
       
        Since $Z$ is not an $X^\ast$-saturated graph, there are (not necessarily distinct) vertices $a,b$ and some $x \in X$ such that $Z$ is not $x$-saturated at $a$ and not $x^{-1}$-saturated at $b$. Without loss of generality, we take $x=x_1$.

       We begin by constructing two particular labeled graphs $W_n$ and $V_s$ with $n=p-k-4$ and $4$ vertices respectively, such that $x_i\in X$ acts trivially on $\V(W_n)$ for each $i \geq 2$ and $x_2$ acts nontrivially on $\V(V_s)$. 
        
    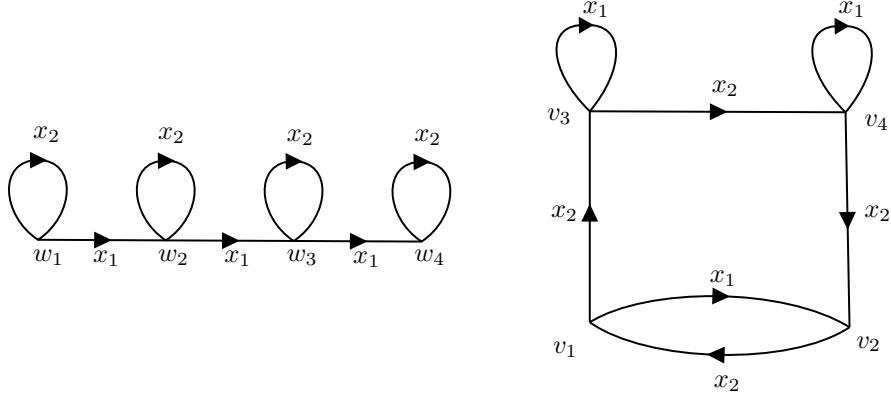
\begin{figure}
    \tikzset{every picture/.style={line width=0.75pt}} 

\begin{tikzpicture}[x=0.75pt,y=0.75pt,yscale=-1,xscale=1]

\draw    (54.73,161.86) -- (119.21,162.16) ;
\draw [shift={(91.97,162.04)}, rotate = 180.27] [fill={rgb, 255:red, 0; green, 0; blue, 0 }  ][line width=0.08]  [draw opacity=0] (8.93,-4.29) -- (0,0) -- (8.93,4.29) -- cycle    ;
\draw    (119.21,162.16) -- (183.69,162.46) ;
\draw [shift={(156.45,162.34)}, rotate = 180.27] [fill={rgb, 255:red, 0; green, 0; blue, 0 }  ][line width=0.08]  [draw opacity=0] (8.93,-4.29) -- (0,0) -- (8.93,4.29) -- cycle    ;
\draw    (183.69,162.46) -- (248.16,162.76) ;
\draw [shift={(220.93,162.64)}, rotate = 180.27] [fill={rgb, 255:red, 0; green, 0; blue, 0 }  ][line width=0.08]  [draw opacity=0] (8.93,-4.29) -- (0,0) -- (8.93,4.29) -- cycle    ;
\draw    (54.73,161.86) .. controls (36.07,149.64) and (34.36,121.63) .. (54.65,121.63) .. controls (74.93,121.63) and (73.22,149.64) .. (54.73,161.86) -- cycle ;
\draw  [fill={rgb, 255:red, 0; green, 0; blue, 0 }  ,fill opacity=1 ] (58.1,121.65) -- (51.24,125.09) -- (51.15,118.27) -- cycle ;
\draw    (119.21,162.16) .. controls (100.55,149.94) and (98.84,121.93) .. (119.12,121.93) .. controls (139.41,121.93) and (137.7,149.94) .. (119.21,162.16) -- cycle ;
\draw  [fill={rgb, 255:red, 0; green, 0; blue, 0 }  ,fill opacity=1 ] (122.57,121.95) -- (115.72,125.39) -- (115.63,118.57) -- cycle ;

\draw    (183.69,162.46) .. controls (165.03,150.24) and (163.31,122.23) .. (183.6,122.23) .. controls (203.89,122.23) and (202.17,150.24) .. (183.69,162.46) -- cycle ;
\draw  [fill={rgb, 255:red, 0; green, 0; blue, 0 }  ,fill opacity=1 ] (187.05,122.25) -- (180.2,125.69) -- (180.1,118.87) -- cycle ;

\draw    (248.16,162.76) .. controls (229.5,150.54) and (227.79,122.53) .. (248.08,122.53) .. controls (268.36,122.53) and (266.65,150.54) .. (248.16,162.76) -- cycle ;
\draw  [fill={rgb, 255:red, 0; green, 0; blue, 0 }  ,fill opacity=1 ] (251.53,122.55) -- (244.67,125.99) -- (244.58,119.17) -- cycle ;

\draw (81.05,165.66) node [anchor=north west][inner sep=0.75pt]   [align=left] {$x_1$};
\draw (147.28,165.66) node [anchor=north west][inner sep=0.75pt]   [align=left] {$x_1$};
\draw (212.21,166.29) node [anchor=north west][inner sep=0.75pt]   [align=left] {$x_1$};
\draw (50.84,103.27) node [anchor=north west][inner sep=0.75pt]   [align=left] {$x_2$};
\draw (113.84,103.9) node [anchor=north west][inner sep=0.75pt]   [align=left] {$x_2$};
\draw (178.78,104.53) node [anchor=north west][inner sep=0.75pt]   [align=left] {$x_2$};
\draw (243.07,104.53) node [anchor=north west][inner sep=0.75pt]   [align=left] {$x_2$};
\draw (50.84,165.53) node [anchor=north west][inner sep=0.75pt]   [align=left] {$w_1$};
\draw (113.84,165.53) node [anchor=north west][inner sep=0.75pt]   [align=left] {$w_2$};
\draw (178.78,165.53) node [anchor=north west][inner sep=0.75pt]   [align=left] {$w_3$};
\draw (243.07,165.53) node [anchor=north west][inner sep=0.75pt]   [align=left] {$w_4$};

\draw    (333,203.5) .. controls (362,224) and (434,226) .. (464,206) ;
\draw [shift={(392.04,219.71)}, rotate = 2.18] [fill={rgb, 255:red, 0; green, 0; blue, 0 }  ][line width=0.08]  [draw opacity=0] (8.93,-4.29) -- (0,0) -- (8.93,4.29) -- cycle    ;
\draw    (462,97.5) -- (464,206) ;
\draw [shift={(463.09,156.75)}, rotate = 268.94] [fill={rgb, 255:red, 0; green, 0; blue, 0 }  ][line width=0.08]  [draw opacity=0] (8.93,-4.29) -- (0,0) -- (8.93,4.29) -- cycle    ;
\draw    (333,97) -- (462,97.5) ;
\draw [shift={(402.5,97.27)}, rotate = 180.22] [fill={rgb, 255:red, 0; green, 0; blue, 0 }  ][line width=0.08]  [draw opacity=0] (8.93,-4.29) -- (0,0) -- (8.93,4.29) -- cycle    ;
\draw    (333,203.5) .. controls (362,185.5) and (431,185.5) .. (464,206) ;
\draw [shift={(403.76,190.51)}, rotate = 181.82] [fill={rgb, 255:red, 0; green, 0; blue, 0 }  ][line width=0.08]  [draw opacity=0] (8.93,-4.29) -- (0,0) -- (8.93,4.29) -- cycle    ;
\draw    (333,97) .. controls (351,75.5) and (351,52.5) .. (331,53.5) .. controls (310,52.5) and (312,76.5) .. (333,97) -- cycle ;
\draw  [fill={rgb, 255:red, 0; green, 0; blue, 0 }  ,fill opacity=1 ] (334,53.46) -- (328.04,56.95) -- (327.95,50.14) -- cycle ;

\draw    (462,97.5) .. controls (480,76) and (480,53) .. (460,54) .. controls (439,53) and (441,77) .. (462,97.5) -- cycle ;
\draw  [fill={rgb, 255:red, 0; green, 0; blue, 0 }  ,fill opacity=1 ] (463,53.96) -- (457.04,57.45) -- (456.95,50.64) -- cycle ;

\draw    (333,97) -- (333,203.5) ;
\draw [shift={(333,143.75)}, rotate = 90] [fill={rgb, 255:red, 0; green, 0; blue, 0 }  ][line width=0.08]  [draw opacity=0] (8.93,-4.29) -- (0,0) -- (8.93,4.29) -- cycle    ;

\draw (310,94) node [anchor=north west][inner sep=0.75pt]   [align=left] {$v_3$};
\draw (470,97) node [anchor=north west][inner sep=0.75pt]   [align=left] {$v_4$};
\draw (313,212) node [anchor=north west][inner sep=0.75pt]   [align=left] {$v_1$};
\draw (466,209) node [anchor=north west][inner sep=0.75pt]   [align=left] {$v_2$};
\draw (312,143) node [anchor=north west][inner sep=0.75pt]   [align=left] {$x_2$};
\draw (470,143) node [anchor=north west][inner sep=0.75pt]   [align=left] {$x_2$};
\draw (393,80) node [anchor=north west][inner sep=0.75pt]   [align=left] {$x_2$};
\draw (392,176) node [anchor=north west][inner sep=0.75pt]   [align=left] {$x_1$};
\draw (394,229) node [anchor=north west][inner sep=0.75pt]   [align=left] {$x_2$};
\draw (328,40) node [anchor=north west][inner sep=0.75pt]   [align=left] {$x_1$};
\draw (457,40) node [anchor=north west][inner sep=0.75pt]   [align=left] {$x_1$};

\end{tikzpicture}

\caption{The graph $W_4$ on the left and $V_{+1,-1}$ on the right.}\label{figure1}
\end{figure}

The construction of $W_n$ is as follows. We first take an interval with $n$ vertices, denoted by $w_1,\ldots,w_n$ and denote the $n-1$ edges by $e_1,\ldots,e_{n-1}$. We label all edges with $x_1$ and orient these edges by 
        $$\iota(e_1)=w_1, \quad \tau(e_j)=\iota(e_{j+1})=w_{j+1}, \quad \tau(e_{n-1})=w_n.$$
        Then for each generator $x_i \in X(i \geq 2)$ and each vertex $w_j(1 \leq j \leq n)$, we attach an oriented edge labeled with $x_i$ from $w_j$ to itself. The resulting labeled graph is $W_n$. (See the left side of Figure \ref{figure1}.) For each $i \geq 2$, $x_i$ fixes all vertices of $W_n$, and hence acts trivially on $\V(W_n)$.
        Note that the unsaturated vertices in $W_n$ are $w_1$ and $w_n$: $w_1$ is not $x_1^{-1}$-saturated, and $w_n$ is not $x_1$-saturated. 

        The construction of $V_s$ is as follows. Here, $s$ denotes an $r$-tuple $(s_i) \in \{\pm 1\}(1 \leq i \leq r)$. We first take four vertices, denoted by $v_1,v_2,v_3,v_4$ and attach an edge labeled with $x_1$ from $v_1$ to $v_2$. For each $i>2$, we attach a loop labeled with $x_i$ to both $v_1$ and $v_2$. For each $i \neq 2$, if $s_i=1$, we attach a loop labeled with $x_i$ to both $v_3$ and $v_4$; if $s_i=-1$, we attach two edges both labeled with $x_i$, from $v_3$ to $v_4$ and from $v_4$ to $v_3$, respectively. If $s_2=+1$, we attach four edges labeled with $x_2$, from $v_1$ to $v_3$, from $v_3$ to $v_1$, from $v_2$ to $v_4$ and from $v_4$ to $v_2$, respectively; if $s_2=-1$, we attach a loop of length four with vertices $v_1,v_2,v_3$ and $v_4$, each edge is labeled with $x_2$. (See the right side of Figure \ref{figure1}.) The resulting labeled graph is $V_s$ and $x_2$ acts non trivially on $\V(V_s)$.
        Note that the unsaturated vertices in $V_s$ are $v_1$ and $v_2$: $v_1$ is not $x_1^{-1}$-saturated, and $v_2$ is not $x_1$-saturated.
        
        We now construct the desired $Y$ by attaching some edges to $Z\cup W_n\cup V_s$ as follows. First, for the six unsaturated vertices $a$, $b$ in $Z$, $w_1$, $w_{n}$ in $W_{n}$, and $v_1$, $v_2$ in $V_s$, we attach three edges labeled with $x_1$: one from $a$ to $w_1$, one from $w_{n}$ to $v_1$ and one from $v_2$ to $b$. The resulting graph $Y'$ has $k+n+4=p$ vertices and it is a labeled graph. Finally, by adding some edges to $Y'$, we obtain the finite cover $Y$ without adding new vertices: 
        $$|\V(Y)|=|\V(Y')|=p.$$
        Note that $Y$ is connected and the vertices $w_1,\ldots,w_{n}$ in $Y$ are fixed by the action of $x_2$. It follows that $F_r$ acts on $\V(Y)$ transitively and $x_2$ moves less than $p-n+1=k+5$ vertices. Therefore, $Y$ meets the requirements specified in the first paragraph and the proof is complete.
\end{proof}

    \begin{rem}
        Note that to enable connecting $W_n$ to $Z$, we require the existence of an unsaturated vertex on $Z$, meaning that $Z$ is not a cover of $F_r$. If there are multiple monochromatic components, we correspondingly require that a certain $F_r$-component is not a cover.
    \end{rem}

   \section{Kurosh decomposition structure derived from the precover}\label{section4}

   In this section, we introduce the Markus-Epstein algorithm (see \cite{Ma2}), which provides a method for constructing subgroup's Kurosh decompositions via precovers. We employ this approach to characterize the structure of subgroups defined by labeled graphs.

      \begin{thm}[Kurosh Subgroup Theorem, \cite{Ku}]\label{kurosh}
          Suppose $G = \ast_{i \in I} G_i$ is a free product and $H$ is a subgroup of $G$. Then $H\cong (\ast H_j)\ast F$, where each $H_j$ is isomorphic to an intersection $H \cap G^{k_i}_i$ of $H$ with a conjugate of some $G_i$. Further, the set $\{H_j\}$ is unique up to conjugation and re-indexing, and the rank of the free group $F$ is uniquely determined.
      \end{thm}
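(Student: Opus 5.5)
We plan to prove the Kurosh Subgroup Theorem topologically, using covering spaces of a graph of spaces. This matches the relative Cayley graph and precover viewpoint of Section \ref{pre}.

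\textbf{Construction.} For each $i\in I$, choose a connected CW-complex $K_i$ with $\pi_1(K_i)=G_i$. For instance, take the presentation complex, so that its 1-skeleton is the labeled Cayley-type graph used above. Form $K$ by taking a base vertex $o$ and, for each $i$, joining $o$ to the base point of $K_i$ by an edge $t_i$. By van Kampen, $\pi_1(K,o)=\ast_{i\in I}G_i=G$. Let $p:\widetilde K\to K$ be the connected cover with $p_*\pi_1(\widetilde K,\tilde o)=H$.

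\textbf{Structure of the cover.} $\widetilde K$ is again a graph of spaces. Its vertex spaces are the components of $p^{-1}(K_i)$ and the points of $p^{-1}(o)$, and its edges are the lifts of the $t_i$. Each component $C$ of $p^{-1}(K_i)$ is a connected cover of $K_i$. Choose a path $\delta$ from $\tilde o$ into $C$, projecting to a loop whose class is $k\in G$. Then $\pi_1(C)$, transported to the base point via $\delta$, maps onto $H\cap kG_ik^{-1}=H\cap G_i^{k}$. The key check is that a loop in $\widetilde K$ whose label lies in $G_i^k$ can be homotoped into $C$. We do this exactly as in Lemma \ref{p in component}: decompose the loop into monochromatic pieces, use the normal form in the free product to see that the intermediate pieces have trivial label, and hence are closed and removable.

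\textbf{Decomposition of $\pi_1$.} Let $\Lambda$ be the underlying graph of $\widetilde K$. Its vertices are the vertex spaces and its edges are the lifts of the $t_i$. Choose a maximal tree $T\subset\Lambda$. Applying van Kampen iteratively gives
$$H\cong\pi_1(\widetilde K)\cong\Bigl(\ast_{C}\pi_1(C)\Bigr)\ast\pi_1(\Lambda).$$
This is valid because every edge group is trivial, so each edge of $T$ contributes only an amalgamation over the trivial group. Each edge of $\Lambda\setminus T$ contributes a free generator. Setting $F=\pi_1(\Lambda)$, which is free, gives existence, with $H_j=\pi_1(C_j)\cong H\cap G_{i_j}^{k_j}$. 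For infinitely many pieces, or a non-finite $\Lambda$, we pass to a direct limit over finite subgraphs; this is routine.

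\textbf{Uniqueness.} We expect this to be the main obstacle. The plan is to characterize the factors intrinsically. The nontrivial $H_j$ should be exactly the nontrivial subgroups of the form $H\cap G_i^{g}$ (for $g\in G$), up to conjugacy in $H$. Equivalently, they are the nontrivial stabilizers in $H$ of the lifted vertex spaces. The reason is that any subgroup of $H$ conjugate in $G$ into some $G_i$ fixes a vertex of the corresponding Bass--Serre tree, and hence lies in an $H$-conjugate of some vertex group.

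We must also show that this characterization does not depend on the chosen decomposition. For this we use the standard fact that, in any free product decomposition $H=(\ast H_j)\ast F$ whose factors are of the given form, a freely indecomposable or non-free elliptic subgroup is conjugate into a single factor.

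The rank of $F$ is then recovered as follows. Let $N$ be the normal closure in $H$ of all the $H_j$. Then $H/N\cong F$, and $N$ is determined by $H$ alone: it is the normal closure of all $H\cap G_i^g$. So $F\cong H/N$ is determined up to isomorphism, and its rank is read off from $\mathrm{rk}\,H_{\mathrm{ab}}(H/N)$ after abelianizing. Here we use that the rank of a free group is an isomorphism invariant, computed as the dimension of its abelianization tensored with $\mathbb Q$, with the infinite case handled by cardinality.
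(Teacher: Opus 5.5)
The paper gives no proof of this theorem; it is quoted from Kurosh. Your argument therefore has to stand on its own. The existence half is the standard Scott--Wall covering-space proof and is sound, up to two small repairs. First, $p(\delta)$ is a path from $o$ to the base point of $K_i$, so you must append $t_i^{-1}$ to get the loop representing $k$. Second, the normal form does not force \emph{every} intermediate monochromatic piece to have trivial label, only \emph{some} piece. For example, in $g_1\,h\,g_2\,h^{-1}g_3$ with $g_2=1$, the pieces $h,h^{-1}$ are nontrivial. So you must delete one trivial (hence closed) piece at a time and induct. A simpler route is homotopy lifting: a loop at a point of $C$ whose class lies in the image of $\pi_1(K_i)$ projects to a loop homotopic rel endpoints into $K_i$, and lifting that homotopy puts the loop into $C$.

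The genuine gap is in the uniqueness part. Your argument needs that, for an \emph{arbitrary} decomposition $H=(\ast H_j)\ast F$ with factors of the stated form, every nontrivial $H\cap G_i^{g}$ is $H$-conjugate into some $H_j$. Only then is $N$ the normal closure of the $H_j$, so that $H/N\cong F$. The fact you quote applies only to freely indecomposable subgroups that are not infinite cyclic. But $H\cap G_i^{g}$ can be free; it always is when $G_i$ is free, which is exactly the factor $F_r$ in this paper. A free elliptic subgroup can then sit inside $F$ rather than inside any $H_j$.

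Read literally for arbitrary decompositions, the uniqueness clause is in fact false. Take $G=\langle a\rangle\ast\langle b\rangle\cong F_2$ and $H=G$. Then $H=\langle a\rangle\ast\langle b\rangle$, with $H_1=H\cap G_1$, $H_2=H\cap G_2$ and $F=1$, satisfies the hypotheses. So does $H=\langle a\rangle\ast F$ with $F=\langle b\rangle$. The sets $\{H_j\}$ differ, and $\mathrm{rk}\,F$ is $0$ in one case and $1$ in the other.

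Uniqueness must therefore be stated, and proved, for the canonical decomposition. In it, the $H_j$ are one representative $H\cap G_i^{g}$ for each double coset $HgG_i$ with nontrivial intersection; equivalently, one for each component $C$ of $p^{-1}(K_i)$ with $\pi_1(C)\neq 1$. For that normalization your Bass--Serre description is exactly right: each nontrivial $H\cap G_i^{g}$ is $H$-conjugate to exactly one $H_j$, since distinct nontrivial free factors are never conjugate. Then $N$ is the normal closure of the $H_j$, and your computation $F\cong H/N$ determines $\mathrm{rk}\,F$. You should drop the second half of your uniqueness paragraph, which tries to handle arbitrary decompositions.
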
  
 
      Let $G=G_1 \ast G_2$ be the free product of $G_1$ and $G_2$ with generating sets $X$ and $Y$ respectively. Let $\Gamma$ be a finite graph labeled with $X\cup Y$ and $v_0$ be the base point of $\Gamma$. Furthermore, we assume that $\Gamma$ is connected, $G$-based and denote $$\mathrm{Lab}(\Gamma,v_0)= H \leq G.$$ 
      
      For a monochromatic component $C$ of $\Gamma$ with base point $v \in \V(C)$, let $T(C)$ be a spanning tree of $C$ with root vertex $v$. Since $\Gamma$ is connected, there is an \textit{approach path} $P_v$ from $v_0$ to $v$, we denote 
      $$\mathrm{Lab}(P_v) \equiv g_v.$$
      Without loss of generality, we can assume that $P_v \cap C=\{v\}$. (Otherwise, since $C$ is connected, we can assume the subpath $P_c=P_v \cap C$ is connected, and then change $v$ to be the initial point of $P_c$ and remove $P_c$ from $P_v$.)  If $v_0 \in \V(C)$, we choose $v=v_0$. Let $\Gamma'$ be a connected subgraph of $\Gamma$ obtained by removing edges in $\E(C)\setminus \E(T(C))$. More precisely, $\V(\Gamma')=\V(\Gamma)$ and 
$$\E(\Gamma')=(\E(\Gamma)\setminus \E(C)) \cup \E(T(C)).$$
   
   Following the above assumption, Markus-Epstein's algorithm is based on the following \cite[Lemma 4.2]{Ma2}.
      \begin{lem}\label{free factor}
      $H=\langle g_v \mathrm{Lab}(C,v) g_v^{-1}, \mathrm{Lab}(\Gamma', v_0)\rangle=g_v \mathrm{Lab}(C,v) g_v^{-1}\ast \mathrm{Lab}(\Gamma', v_0)$.
      \end{lem}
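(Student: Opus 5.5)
The plan is to prove the two claims separately: first generation, then freeness, the latter via a cancellation argument in the style of Lemma \ref{p in component}. Write $A=g_v\mathrm{Lab}(C,v)g_v^{-1}$ and $B=\mathrm{Lab}(\Gamma',v_0)$.

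\emph{Generation.} Both $A$ and $B$ lie in $H$. For $B$ this holds because $\Gamma'\subset\Gamma$. For $A$ it holds because, for every loop $q$ in $C$ at $v$, the path $P_vqP_v^{-1}$ is a loop at $v_0$ with label $g_v\mathrm{Lab}(q)g_v^{-1}$. For the converse, choose a spanning tree of $\Gamma$ that contains $T(C)$ and $P_v$; this is possible because $P_v\cap C=\{v\}$. Then $\pi_1(\Gamma,v_0)$ is generated by the standard loops through the non-tree edges. A non-tree edge $e$ of $\Gamma$ lies either in $\Gamma'$ or in $\mathrm{E}(C)\setminus\mathrm{E}(T(C))$.
\begin{enumerate}
\item If $e\in\Gamma'$, its standard loop lies in $\Gamma'$, so its label is in $B$.
\item If $e\in \mathrm{E}(C)\setminus \mathrm{E}(T(C))$, its standard loop can be chosen as $P_v\,t_1\,e\,t_2^{-1}\,P_v^{-1}$ with $t_1,t_2$ geodesics in $T(C)$ from $v$. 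Its label is therefore in $A$.
\end{enumerate}
Since $\mathrm{Lab}$ is multiplicative on concatenation of loops, $H=\langle A,B\rangle$.

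\emph{Freeness.} The aim is to show that every alternating product $w=a_1b_1a_2b_2\cdots$ with $1\ne a_i\in A$ and $1\ne b_i\in B$ is nontrivial in $G$. Each factor is realized by a loop at $v_0$:
\begin{enumerate}
\item each $a_i$ by $P_vq_iP_v^{-1}$, with $q_i$ a loop in $C$ at $v$ and $\mathrm{Lab}(q_i)\neq 1$ in the factor $G_j$ that labels $C$;
\item each $b_i$ by a loop $r_i$ in $\Gamma'$.
\end{enumerate}
Concatenating gives a loop $L$ at $v_0$ with $\mathrm{Lab}(L)=w$. Suppose $w=1$. Split $L$ into maximal monochromatic subpaths and repeat the reduction of Lemma \ref{p in component}: since $G$ is a free product, some maximal monochromatic subpath has trivial label. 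By Lemma \ref{free product based} and $G$-basedness that subpath is closed, so it can be deleted and its neighbours merged. Iterating ends with the empty loop.

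The key invariant is the following. Each segment $q_i$ enters and leaves $C$ only at $v$. Every other visit of $L$ to $C$ lies in $r_i$ or $P_v$ and uses only edges of $T(C)$; such a visit meets $C$ in a tree path.

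\emph{Main obstacle.} The difficult step is showing that the reduction can never cancel the $C$-contribution of some $q_i$ without cancelling the $G_j$-element $\mathrm{Lab}(q_i)$ itself. A deletion might merge a $q_i$-segment with tree-segments of $C$ coming from the neighbouring $r$-loops. I would first try to prove that any maximal merged $C$-segment containing $q_i$ has the form $t\,q_i\,t'$ (or a product of several $q$'s separated by tree paths) modulo closed trivial pieces, with $t,t'$ tree paths meeting at $v$. Its label is then a conjugate of a nontrivial product in $\mathrm{Lab}(C,v)$, and so it cannot become trivial. If this bookkeeping proves too delicate, the fallback is structural. Build the graph $\Delta$ obtained by wedging a copy of $C$ onto $\Gamma'$ along $P_v$. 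Check that $\Delta$ is $G$-based and that its label group is $A\ast B$ via the normal form for $G_1\ast G_2$. Then observe that folding $\Delta$ onto $\Gamma$ identifies only edges of $T(C)$ with their copies, which does not change the label group.
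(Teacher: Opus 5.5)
The paper gives no proof of this lemma; it is quoted from Markus-Epstein (Lemma 4.2 of [Ma2]), so your argument has to stand on its own.

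\textbf{Generation.} This half is fine after one small repair. $P_v$ need not be embedded, so pass to an embedded subpath $P_v'\subseteq P_v$. Then $T(C)\cup P_v'$ is a tree, and any spanning tree containing it lies in $\Gamma'$. Replacing $g_v$ by $\mathrm{Lab}(P_v')$ changes it by the label of a loop in $P_v\subset\Gamma'$, i.e.\ by an element of $B$. This affects neither $\langle A,B\rangle$ nor freeness.

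\textbf{Freeness: the gap.} This half is the real content of the lemma, and it is not proved. Your invariant does not control merges across a cancelled $b$-segment. A merged $C$-piece $t\,q_i\,s\,q_{i+1}\,t'$, with $s$ a tree loop at $v$, has label conjugate to $\mathrm{Lab}(q_i)\mathrm{Lab}(q_{i+1})$, which can be $1$. So ``a nontrivial product in $\mathrm{Lab}(C,v)$'' is not automatic. Excluding it means showing that $b_i\neq 1$ forbids such a collapse, and that is exactly the step you leave open.

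The fallback does not help either. Attaching a copy of $C$ (at $v$, or along a copy of $P_v$) in general creates two edges with the same label at the attaching vertex. So $\Delta$ violates the immersion condition and is not $G$-based. Moreover, ``its label group is $A\ast B$ via the normal form'' is just the lemma restated. Folding preserves the generated subgroup but says nothing about a free splitting.

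\textbf{The missing idea.} What is needed is a \emph{local} normal-form argument that uses $G$-basedness of $C$ directly.
\begin{enumerate}
\item Conjugate by $g_v$, which is legitimate since $P_v\subset\Gamma'$. It then suffices to show $\langle L,B_v\rangle=L\ast B_v$, where $L=\mathrm{Lab}(C,v)\le G_1$ (say) and $B_v=\mathrm{Lab}(\Gamma',v)$.
\item Reduce each $b_i\in B_v\setminus\{1\}$ separately. Represent it by a loop at $v$ in $\Gamma'$. Delete monochromatic pieces with trivial label (they are closed) and merge neighbours, until every maximal monochromatic piece has nontrivial label. The piece labels then form the normal form of $b_i$ in $G_1\ast G_2$.
\item The $X$-component of $v$ in $\Gamma'$ is $T(C)$. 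Hence $b_i$ is never a single $G_1$-syllable, and a $G_1$-syllable at its start (end) equals $\mathrm{Lab}(t)$ for a path $t$ in $T(C)$ starting (ending) at $v$.
\item In $\ell_1b_1\ell_2b_2\cdots$, each $\ell_i\in L\setminus\{1\}$ can only merge with such boundary syllables. This gives $\mathrm{Lab}(t')\,\ell_i\,\mathrm{Lab}(t)$, or a one-sided version.
\item Suppose this were $1$, and let $q$ be a loop in $C$ at $v$ labelled $\ell_i$. Then the path $t'qt$ has label $1$, hence is closed. So $t'$ and $\bar t$ are paths in the tree $T(C)$ with the same endpoints. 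This forces $\mathrm{Lab}(t')=\mathrm{Lab}(t)^{-1}$ and so $\ell_i=1$, a contradiction. The one-sided cases are identical.
\end{enumerate}
So the product is a reduced word of positive length, hence nontrivial, and no global bookkeeping is needed.
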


      Lemma \ref{free factor} shows that every monochromatic component $C$ with nontrivial $\mathrm{Lab}(C,v)$ corresponds to a free factor $g_v \mathrm{Lab}(C,v) g_v^{-1}$ of $\mathrm{Lab}(\Gamma,v_0)$. Then we obtain the following free decomposition of $\mathrm{Lab}(\Gamma,v_0)$.

      \begin{lem}\label{algorithm}
          Let $\Gamma$ be a finite, connected, $G$-based graph labeled with $X\cup Y$. If there are finitely many monochromatic components $C_i(1 \leq i \leq k)$ which are not trees, then 
          $$\mathrm{Lab}(\Gamma,v_0)=\big(\ast_{1\leq i\leq k}\mathrm{Lab}(P_{i}) \mathrm{Lab}(C_i,v_i) \mathrm{Lab}(P_{i})^{-1}\big) \ast \mathrm{Lab}(\Delta,v_0),$$
          where $P_{i}$ is an approach path from the base point $v_0$ of $\Gamma$ to the base point $v_i$ of $C_i$, the subgraph $\Delta$ is obtained from $\Gamma$ by removing all edges in each component $C_i$ that do not belong to its spanning tree $T(C_i)$, and $\mathrm{Lab}(\Delta,v_0)$ is a free group that does not conjugate into $G_1$ or $G_2$. 
      \end{lem}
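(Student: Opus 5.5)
We prove Lemma \ref{algorithm} by induction on the number $k$ of monochromatic components that are not trees, applying Lemma \ref{free factor} once per such component. The starting observation is that a monochromatic component $C$ which is a tree has $\mathrm{Lab}(C,v)=1$: every loop in a tree backtracks, so its label freely reduces to $1$. Only the non-tree components $C_1,\ldots,C_k$ can therefore contribute nontrivial free factors. For each $i$ we fix a root $v_i$ and an approach path $P_i$ from $v_0$, normalized as before Lemma \ref{free factor} so that $P_i\cap C_i=\{v_i\}$.

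For the inductive step, apply Lemma \ref{free factor} to $C_1$. This gives
$$\mathrm{Lab}(\Gamma,v_0)=\mathrm{Lab}(P_1)\mathrm{Lab}(C_1,v_1)\mathrm{Lab}(P_1)^{-1}\ast\mathrm{Lab}(\Gamma_1',v_0),$$
where $\Gamma_1'$ is $\Gamma$ with the non-tree edges of $C_1$ deleted. We then check that $\Gamma_1'$ satisfies the hypotheses again.
\begin{enumerate}
\item $\Gamma_1'$ is connected, because $T(C_1)$ spans $C_1$.
\item $\Gamma_1'$ is finite and labeled.
\item $\Gamma_1'$ is $G$-based. Deleting edges keeps each monochromatic component $G_i$-based: any path in a subgraph is a path in $\Gamma$, so a path with trivial label is still closed. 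Lemma \ref{free product based} then applies.
\item Its non-tree monochromatic components are exactly $C_2,\ldots,C_k$, since $T(C_1)$ is a tree and the other components are untouched.
\end{enumerate}
There is one subtlety. The approach paths $P_i$ for $i\ge 2$ were chosen in $\Gamma$ and might use deleted edges of $C_1$. We either rechoose them inside $\Gamma_1'$, which changes each factor only by conjugation within $H$ and keeps the displayed form with the new $P_i$, or we choose all $P_i$ in $\Delta$ from the start, which is possible because $\Delta$ is connected with the same vertex set. Iterating $k$ times yields the displayed decomposition, where $\Delta$ is $\Gamma$ with the non-tree edges of every $C_i$ removed.

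It remains to show that $\mathrm{Lab}(\Delta,v_0)$ is free and contains no nontrivial element conjugate into $G_1$ or $G_2$. Every monochromatic component of $\Delta$ is a tree, and this is the step I expect to be the main obstacle.

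For freeness, I would compare $\Delta$ with the graph $\hat\Delta$ obtained by contracting each monochromatic tree to a point. Each loop at $v_0$ is cut into maximal monochromatic segments, and each segment has a label that is a reduced element of $G_1$ or $G_2$ determined by its endpoints, since a path inside a tree is unique up to backtracking. The resulting map $\pi_1(\Delta,v_0)\to G$ should factor through a graph-of-groups-type argument: labels of reduced loops alternate between nontrivial elements of $G_1$ and $G_2$. By the normal form theorem for free products they are nontrivial, so the label map is injective on $\pi_1(\Delta)$, which is free.

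A nontrivial element $h=\mathrm{Lab}(Q)$ of $\mathrm{Lab}(\Delta,v_0)$ conjugate into $G_1$ is ruled out in two steps. First, a conjugation and cyclic-reduction argument lets us take $Q$ cyclically reduced as a loop, based at some vertex. Then Lemma \ref{p in component}, applied to $\Delta$, which is $G$-based, produces a loop with the same nontrivial label inside a single $X$-component of $\Delta$. That component is a tree, so the label is $1$, a contradiction. The delicate part is the reduction from ``conjugate into $G_1$'' to ``label of a loop lying in $G_1$''. I would handle it by moving the base point along the loop via the normal form of the conjugator, using that $\Delta$ is $G$-based so paths with equal labels from the same vertex coincide.
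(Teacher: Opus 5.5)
Your proof is correct. Its first half is the paper's argument: induction on $k$, splitting off one non-tree component at a time with Lemma \ref{free factor}. You are in fact more careful than the paper here, since you re-check the hypotheses for $\Gamma_1'$ and choose all $P_i$ inside $\Delta$ so that no approach path uses a deleted edge.

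The real difference is the final claim. The paper does not prove that $\mathrm{Lab}(\Delta,v_0)$ is free and meets no conjugate of $G_1$ or $G_2$ nontrivially; it just cites \cite[Lemma 5.2]{Ma2}. You prove it from the normal form theorem, which makes the lemma self-contained and shows exactly where $G$-basedness enters.

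You should state that point explicitly, since everything rests on it. In a reduced path in $\Delta$, each maximal monochromatic segment is a nonempty reduced path in a tree, so it has distinct endpoints. Hence its label is nontrivial, because $\Delta\subset\Gamma$ is $G$-based. This is indispensable, as $G$ may have relations. So the segments of a reduced loop at $v_0$ spell out the free-product normal form of its label. That gives injectivity of $\pi_1(\Delta,v_0)\to G$ directly, and the contracted graph $\hat\Delta$ plays no role.

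The step you call delicate then needs neither cyclic reduction nor Lemma \ref{p in component}. Let $Q$ be a reduced loop at $v_0$ with $1\neq h=\mathrm{Lab}(Q)$, and suppose $h$ is conjugate into $G_1$.
\begin{enumerate}
\item The normal form of $h$ is $c_1\cdots c_s\,g\,c_s^{-1}\cdots c_1^{-1}$ with $g\in G_1\setminus\{1\}$.
\item By uniqueness of normal forms, $Q=S_1\cdots S_s\,S\,S_s'\cdots S_1'$ segment by segment, with matching labels.
\item The path $S_1\cdots S_s$ and the reverse of $S_s'\cdots S_1'$ both start at $v_0$ with label $c_1\cdots c_s$. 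By $G$-basedness they end at the same vertex.
\item So $S$ is a nonempty closed reduced path in a tree, which is impossible.
\end{enumerate}
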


      \begin{proof}
          Let $\mathrm{Lab}(P_{i})=g_{i}$. We proceed by induction on the number $k$ of monochromatic components which are not trees. Since there are finitely many such monochromatic components, the process is finite. When $k=1$, by Lemma \ref{free factor}, 
          $$\mathrm{Lab}(\Gamma,v_0)=g_{1} \mathrm{Lab}(C_1,v_1) g_{1}^{-1}\ast \mathrm{Lab}(\Gamma', v_0),$$
          $\Gamma'$ is the required $\Delta$.

          When $k>1$, we start from the monochromatic component $C_1$ containing the base point $v_0$ of $\Gamma$ such that $v_1=v_0 \in \V(C_1)$, then by Lemma \ref{free factor},
          $$\mathrm{Lab}(\Gamma,v_0)=g_{1} \mathrm{Lab}(C_1,v_1) g_{1}^{-1}\ast \mathrm{Lab}(\Gamma'', v_0).$$
          Since $\Gamma''$ embeds in $\Gamma$ and has fewer monochromatic components which are not trees than $\Gamma$, by the induction hypothesis, we have
          $$\mathrm{Lab}(\Gamma'',v_0)=(\ast_{2\leq i\leq k} ~g_i \mathrm{Lab}(C_i,v_i) g_i^{-1}) \ast \mathrm{Lab}(\Delta,v_0).$$
          By \cite[Lemma 5.2]{Ma2}, $\mathrm{Lab}(\Delta,v_0)$ is a free group that does not conjugate into $G_1$ or $G_2$.
          Then combining the above two equation, the proof is complete.
      \end{proof}

      Moreover, the free decomposition in Lemma \ref{algorithm} coincides with the standard Kurosh decomposition.

      \begin{lem}\label{intersection}
         In Lemma \ref{algorithm}, each free factor 
         $$L_i:=\mathrm{Lab}(P_{i}) \mathrm{Lab}(C_i,v_i) \mathrm{Lab}(P_{i})^{-1}=\mathrm{Lab}(\Gamma,v_0) \cap G_k^{\mathrm{Lab}(P_{i})}$$ 
         for some $k \in \{1,2\}$ and $\mathrm{Lab}(C_i,v_i) \leq G_k$.
      \end{lem}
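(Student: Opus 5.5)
Write $H=\mathrm{Lab}(\Gamma,v_0)$ and $g_i=\mathrm{Lab}(P_i)$. The plan is to show $L_i\subseteq H\cap G_k^{g_i}$ directly, and to get the reverse inclusion by moving a loop into $C_i$ with Lemma \ref{p in component}.

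First, the setting. The component $C_i$ is monochromatic, so it is labeled only with $X$ or only with $Y$; fix $k$ so that $C_i$ is labeled with the generators of $G_k$. Every loop in $C_i$ based at $v_i$ then has label in $G_k$, so $\mathrm{Lab}(C_i,v_i)\le G_k$. This is the second claim of the statement. It also gives $L_i=g_i\mathrm{Lab}(C_i,v_i)g_i^{-1}\le G_k^{g_i}$.

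Second, the inclusion $L_i\subseteq H$. Take a loop $Q$ in $C_i$ at $v_i$. The path $P_iQ\bar P_i$ is a loop at $v_0$, and its label is $g_i\mathrm{Lab}(Q)g_i^{-1}$. Hence $L_i\le H\cap G_k^{g_i}$.

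Third, the reverse inclusion. Let $h\in H\cap G_k^{g_i}$ with $h\neq 1$; the case $h=1$ is trivial. Write $h=g_i c g_i^{-1}$ with $1\neq c\in G_k$. Because $h\in H$, there is a loop $R$ at $v_0$ with $\mathrm{Lab}(R)=h$.
\begin{itemize}
\item[(a)] The path $\bar P_i R P_i$ is a loop at $v_i$. Its label is $g_i^{-1}hg_i=c$, which is a nontrivial element of $G_k$.
\item[(b)] The vertex $v_i$ lies in the $G_k$-component $C_i$. So Lemma \ref{p in component} applies, with the roles of $X$ and $Y$ swapped if $k=2$ (the argument there is symmetric). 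It gives a loop $P'$ inside $C_i$ at $v_i$ with $\mathrm{Lab}(P')=c$.
\item[(c)] Therefore $c\in\mathrm{Lab}(C_i,v_i)$, and so $h=g_icg_i^{-1}\in L_i$.
\end{itemize}
This proves $H\cap G_k^{g_i}\subseteq L_i$, and hence equality.

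The main obstacle is step (b): checking that Lemma \ref{p in component} really applies. That lemma needs $\Gamma$ to be finite, connected and $G$-based, which are exactly the standing hypotheses of Lemma \ref{algorithm}. It also needs $C_i$ to be a maximal monochromatic component, which it is by definition. One should further note that Lemma \ref{p in component} uses only that labels read in $G_1\ast G_2$ and that $\Gamma$ is $G$-based, so the $Y$-component case goes through verbatim.

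Finally, the equality is compatible with the Kurosh decomposition in Theorem \ref{kurosh}. Via Lemma \ref{algorithm}, this identifies the factors $L_i$ with the intersections of $H$ with conjugates of the free factors.
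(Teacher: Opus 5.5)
Your proposal is correct and follows essentially the same route as the paper. The inclusion $L_i\le \mathrm{Lab}(\Gamma,v_0)\cap G_k^{\mathrm{Lab}(P_i)}$ comes from monochromaticity of $C_i$ together with conjugating loops in $C_i$ by $P_i$. For the reverse inclusion, the loop $\bar P_i R P_i$ at $v_i$ has label in $G_k$, and Lemma \ref{p in component} pushes it into $C_i$. You also handle the trivial element separately, which is needed because Lemma \ref{p in component} assumes a nontrivial label, and you note that the $Y$-component case is symmetric; the paper leaves both of these points implicit.
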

      \begin{proof}
           Since $C_i$ is a monochromatic component, $\mathrm{Lab}(C_i,v_i)\leq G_k$ for some $k \in \{1,2\}$ and
           $$L_i \leq \mathrm{Lab}(\Gamma,v_0) \cap G_k^{\mathrm{Lab}(P_i)}$$ clearly. Below we show $ \mathrm{Lab}(\Gamma,v_0) \cap G_k^{\mathrm{Lab}(P_i)}\leq L_i.$

           For any $\gamma \in \mathrm{Lab}(\Gamma,v_0)\cap G_k^{\mathrm{Lab}(P_i)}$, we can denote 
           $\gamma =\mathrm{Lab}(P_i) g \mathrm{Lab}(P_i)^{-1}$ for some $g\in G_k$. Since $\gamma \in \mathrm{Lab}(\Gamma,v_0)$, there is a loop $P_\gamma$ starting from $v_0$ with $\gamma = \mathrm{Lab}(P_{\gamma})$. It implies that 
           $$g = \mathrm{Lab}(P_i)^{-1} \mathrm{Lab}(P_{\gamma})\mathrm{Lab}(P_i)=\mathrm{Lab}(\bar{P}_iP_{\gamma}P_i) \in G_k,$$ 
           where $\bar{P}_iP_{\gamma}P_i$ is a loop starting from $v_i$. 
           By Lemma \ref{p in component}, there is a loop $P_{g} \subset C_i$ starting from $v_i$ such that $$g=\mathrm{Lab}(\bar{P}_iP_{\gamma}P_i)=\mathrm{Lab}(P_{g}),$$ which implies that $g \in \mathrm{Lab}(C_i,v_i)$ and then $\gamma \in L_i$.
      \end{proof}

    \section{Proof of the main theorem}\label{proof}
    
    In this section, we employ Wilton's construction and Markus-Epstein's algorithm to establish Theorem \ref{main2}.

    Let $F_r=\langle X \mid -\rangle$ be a free group of rank $r>1$, $G=\langle Y \mid R \rangle$ a group and 
    $$H= \langle h_1, h_2,\cdots,h_n\rangle \leq F_r \ast G.$$ 
   For any finite set $\{\gamma_1,\cdots,\gamma_m\} \subset (F_r\ast G)\setminus H$, we pick a finite connected subgraph $\Gamma$ of $\mathrm{Cay}(F_r \ast G,H)$ as follows: 
    \begin{align}
        \Gamma=\bigcup_{i=1}^n p_i\bigcup_{j=1}^m p'_j \tag{$\ast$} \label{gamma}\subset \mathrm{Cay}(F_r \ast G,H),
    \end{align}
   where all the loops $p_i$ and paths $p'_j$ have the same initial point $H \cdot 1$. and the labels of $p_i$ and $p'_j$ are given by $h_i$ and $\gamma_j$ respectively. Note that $\Gamma$ is an $F_r\ast G$-based labeled graph because it embeds in the $F_r\ast G$-based graph $\mathrm{Cay}(F_r \ast G,H)$. Moreover, any $F_r$-component of $\Gamma$ is $F_r$-based and any $G$-component of $\Gamma$ is $G$-based by Lemma \ref{free product based}. 

    \begin{lem}\label{H equals Lab}
    $\mathrm{Lab}(\Gamma, H \cdot 1 )=\mathrm{Lab}(\mathrm{Cay}(F_r \ast G,H), H \cdot 1)=H$.
    \end{lem}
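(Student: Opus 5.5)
The second equality is Lemma \ref{loop lem in relative Cayley graph}, so only $\mathrm{Lab}(\Gamma, H\cdot 1)=H$ needs proof. I will show the two inclusions separately, using that $\Gamma$ is a subgraph of $\mathrm{Cay}(F_r\ast G,H)$ containing the base point $H\cdot 1$.

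For $\mathrm{Lab}(\Gamma,H\cdot 1)\subseteq \mathrm{Lab}(\mathrm{Cay}(F_r\ast G,H),H\cdot 1)$, take any loop in $\Gamma$ based at $H\cdot 1$. Under the inclusion $\Gamma\hookrightarrow \mathrm{Cay}(F_r\ast G,H)$, which preserves labels and the base point, it is also a loop in the relative Cayley graph based at $H\cdot 1$. Its label therefore lies in $\mathrm{Lab}(\mathrm{Cay}(F_r\ast G,H),H\cdot 1)$, which equals $H$ by Lemma \ref{loop lem in relative Cayley graph}.

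For the reverse inclusion $H\subseteq \mathrm{Lab}(\Gamma,H\cdot 1)$, I will use the paths $p_i$ in the construction \eqref{gamma}. Each $p_i$ is the path in $\mathrm{Cay}(F_r\ast G,H)$ starting at $H\cdot 1$ and reading a fixed word for $h_i$. Since $h_i\in H$, Lemma \ref{loop lem in relative Cayley graph} says $p_i$ is a closed loop at $H\cdot 1$. By construction $p_i\subseteq\Gamma$, so $h_i\in \mathrm{Lab}(\Gamma,H\cdot 1)$. The set $\mathrm{Lab}(\Gamma,H\cdot 1)$ is a subgroup: loops concatenate, and reversing a loop inverts its label. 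Hence it contains $\langle h_1,\dots,h_n\rangle = H$.

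Combining the two inclusions gives $\mathrm{Lab}(\Gamma,H\cdot 1)=H$. No step is a real obstacle. The only point needing care is that the $p_i$ really are closed loops in $\Gamma$, and this is exactly what Lemma \ref{loop lem in relative Cayley graph} provides. The paths $p'_j$ play no role here, except that adding them cannot enlarge the label group beyond $H$, which the first inclusion already covers.
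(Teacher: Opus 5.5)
Your proposal is correct and takes the same approach as the paper: $\mathrm{Lab}(\Gamma,H\cdot 1)\le H$ because $\Gamma$ is a subgraph of $\mathrm{Cay}(F_r\ast G,H)$ (using Lemma \ref{loop lem in relative Cayley graph}), and the reverse inclusion comes from the loops $p_i$ labeled by the generators $h_i$. You also spell out two points the paper leaves implicit: why the $p_i$ close up, and why the set of labels is a subgroup.
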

    \begin{proof}
       The last ``=" follows from Lemma \ref{loop lem in relative Cayley graph}. Since $\Gamma$ is the subgraph of $\mathrm{Cay}(F_r \ast G,H)$, each loop in $\mathrm{Lab}(\Gamma, H \cdot 1 )$ is again a loop in $\mathrm{Lab}(\mathrm{Cay}(F_r \ast G,H), H \cdot 1)$, that is, 
          $$\mathrm{Lab}(\Gamma, H \cdot 1 ) \leq \mathrm{Lab}(\mathrm{Cay}(F_r \ast G,H), H \cdot 1).$$
       Conversely, since the labels of the loops $p_i(1 \leq  i \leq n)$ are given by the generators $h_i$ of $H$, we have $H \leq \mathrm{Lab}(\Gamma, H \cdot 1)$ clearly.
    \end{proof}

    We can describe the $\A \cup \mS$-separability of $H$ via the embedding property of $\Gamma$. 

    \begin{prop}\label{prop}
        If there exists a subgroup $H_0\leq F_r \ast G$ with prime index $p$ for $k=|V(\Gamma)|+2$ as in Lemma \ref{Jordan for p}, such that $(\Gamma, H\cdot 1)$ can be embedded in $\mathrm{Cay}(F_r \ast G,H_0,H_0\cdot 1)$ and there is a $\gamma \in F_r \ast  G$ acting on $\V(\mathrm{Cay}(F_r \ast G,H_0))$ nontrivially and moving less than $k$ vertices, then $H$ is $\A \cup \mS$-separable in $F_r \ast G$.
    \end{prop}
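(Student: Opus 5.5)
The plan is to realize the required quotient as the permutation action of $F_r\ast G$ on the right cosets of $H_0$. Let $Y=\mathrm{Cay}(F_r\ast G,H_0,H_0\cdot 1)$, whose vertex set $\V(Y)$ has $p$ elements. The group acts on the right cosets $H_0 g$ by $\gamma\cdot H_0g=H_0g\gamma^{-1}$; this is the right-multiplication action written as a left action. Let $f\colon F_r\ast G\to S_p$ be the resulting homomorphism.

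First, $f$ is transitive, since $H_0g$ is the image of $H_0$ under $g^{-1}$. As $p$ is prime, a transitive subgroup of $S_p$ is primitive. The element $\gamma$ in the hypothesis acts nontrivially and moves fewer than $k$ vertices. Since $p>N$ for $k$, Lemma \ref{Jordan for p} gives $f(F_r\ast G)=S_p$ or $A_p$. So $f$ is a surjection onto a group in $\A\cup\mS$.

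Second, we check that $f(\gamma_j)\notin f(H)$. Let $\phi\colon(\Gamma,H\cdot 1)\to(Y,H_0\cdot 1)$ be the embedding. It is a label-preserving morphism sending base point to base point. Hence every loop $p_i$ maps to a loop at $H_0\cdot 1$ with label $h_i$. By Lemma \ref{loop lem in relative Cayley graph}, each $h_i\in H_0$, so $H\le H_0$. Each path $p'_j$ starts at $H\cdot1$ and ends at $H\gamma_j\neq H\cdot 1$, because $\gamma_j\notin H$. Since $\phi$ is injective on vertices, $\phi(p'_j)$ is a non-closed path from $H_0\cdot 1$ labeled $\gamma_j$. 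Therefore $H_0\gamma_j\neq H_0$, that is, $\gamma_j\notin H_0$.

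Third, we pass from $H_0$ to the images. The stabilizer of the point $H_0\cdot1$ under $f$ is exactly $H_0$. Call this point $\ast$. Then $f(H)\le f(H_0)\le \mathrm{Stab}_{f(F_r\ast G)}(\ast)$. Suppose $f(\gamma_j)=f(h)$ for some $h\in H$. Then $f(\gamma_j)$ would fix $\ast$, which would put $\gamma_j$ in $\mathrm{Stab}(\ast)=H_0$, a contradiction. Hence $f(\gamma_j)\notin f(H)$ for all $j$.

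The only delicate points are bookkeeping. One is the left/right convention for the action, so that the point stabilizer is $H_0$ rather than a conjugate of it. The other is using injectivity of the embedding on vertices to conclude that $\phi(p'_j)$ is not closed. The Jordan step is immediate given the choice of $p$; the exact off-by-two in $k$ does not matter, since all that is needed is "fewer than $k$" together with $p>N(k)$.
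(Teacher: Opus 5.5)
Your proof is correct and follows the paper's argument: the permutation action on the $p$ cosets of $H_0$ is transitive, Lemma \ref{Jordan for p} forces the image to be $S_p$ or $A_p$, and injectivity of the embedding keeps each $p'_j$ non-closed, so $\gamma_j$ moves the base point $H_0\cdot 1$ while $H$ fixes it. Your explicit checks that $H\le H_0$ and that the stabilizer of the base point is exactly $H_0$ simply make precise what the paper leaves implicit.
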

    \begin{proof}
        Following the notation above, we will show that there exists a surjection $f$ from $F_r \ast G$ to an alternating or symmetric group such that $f(\gamma_j) \notin f(H)$ for all $1 \leq j \leq m$.  

        By the hypothesis, there exists a subgroup $H_0\leq F_r \ast G$ with prime index $p$ as in Lemma \ref{Jordan for p} such that $(\Gamma, H\cdot 1)$ embeds in $\mathrm{Cay}(F_r \ast G,H_0,H_0\cdot 1)$. 
        Note that $$|\V(\mathrm{Cay}(F_r \ast G,H_0))|=p$$ and the action of $F_r \ast G$ on $\V(\mathrm{Cay}(F_r \ast G,H_0))$ is transitive and there exists $\gamma\in F_r \ast G$ moves less than $k$ vertices, then we have a homomorphism 
        $$f: F_r \ast G\to S_p$$ with $f(F_r \ast G)=S_p$ or $A_p$ by Lemma \ref{Jordan for p}. Moreover, since  for any $j=1, \ldots, m$,
        $$\gamma_j\notin H=\mathrm{Lab}(\mathrm{Cay}(F_r \ast G,H),H \cdot 1),$$ the path $p'_j$ is not closed in $\Gamma$ and hence not closed again in $\mathrm{Cay}(F_r \ast G,H_0)$. Therefore, the action of $\gamma_j$ on $\V(\mathrm{Cay}(F_r \ast G,H_0,H_0\cdot 1))$ does not fix the base point $H_0 \cdot 1$ while the action of $H$ does, it implies $f(\gamma_j) \notin f(H)$ for any $1 \leq j \leq m$.
    \end{proof}

    \begin{lem}\label{no cover component 1}
        If $H \cap F_r^{\gamma}\neq 1$ has infinite index in $F_r^{\gamma}$ for some $\gamma \in F_r \ast G$, then there is an $F_r$-component in $\Gamma$ which is not a cover of $F_r$.
    \end{lem}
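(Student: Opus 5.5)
We argue by contradiction: suppose every $F_r$-component of $\Gamma$ is a cover of $F_r$, i.e.\ is $X^\ast$-saturated. We will show that then every nontrivial intersection $H\cap F_r^{\gamma}$ has finite index in $F_r^{\gamma}$, contradicting the hypothesis. The main tool is the Markus-Epstein decomposition of Lemma \ref{algorithm}, combined with Lemma \ref{intersection} and the uniqueness in the Kurosh Subgroup Theorem \ref{kurosh}.

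First, since $\Gamma$ is finite, connected and $F_r\ast G$-based, Lemma \ref{algorithm} applies. By Lemma \ref{H equals Lab}, it writes
$$H=\mathrm{Lab}(\Gamma,H\cdot 1)=\big(\ast_{i}\, g_i\mathrm{Lab}(C_i,v_i)g_i^{-1}\big)\ast \mathrm{Lab}(\Delta,H\cdot1),$$
where the $C_i$ range over the monochromatic components that are not trees, and $\mathrm{Lab}(\Delta,H\cdot 1)$ is free and does not conjugate into either factor. By Lemma \ref{intersection}, each factor coming from an $F_r$-component $C_i$ equals $H\cap F_r^{g_i}$. Since we are assuming $C_i$ is a finite cover of $F_r$, the group $\mathrm{Lab}(C_i,v_i)$ has finite index in $F_r$, namely index $|\V(C_i)|$. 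Hence $H\cap F_r^{g_i}$ has finite index in $F_r^{g_i}$.

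Now fix $\gamma$ with $1\neq H\cap F_r^{\gamma}$ of infinite index in $F_r^\gamma$. By the Kurosh theorem, $H\cap F_r^\gamma$ is contained in a conjugate $hK h^{-1}$ ($h\in H$) of one of the Kurosh factors $K$ above. The factor cannot be $\mathrm{Lab}(\Delta)$: in a free product, an element conjugate into $F_r$ that lies in a free factor $K$ of $H$ which meets no conjugate of a factor is impossible. More carefully, the nontrivial elliptic subgroups of $H$, i.e.\ those fixing a vertex of the Bass--Serre tree, are exactly the conjugates within $H$ of subgroups of the vertex-group factors $H\cap F_r^{g_i}$ and $H\cap G^{g_j}$. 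Since $F_r^\gamma$ and $G^{g}$ intersect trivially, we get
$$H\cap F_r^{\gamma}=h\,(H\cap F_r^{g_i})\,h^{-1}$$
for some $i$ and $h\in H$, with $\gamma\in h g_i F_r$, because the vertex stabilizers coincide. This subgroup has finite index in $F_r^{\gamma}=F_r^{hg_i}$, which is a contradiction.

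The main obstacle is this identification step: justifying that $H\cap F_r^\gamma$ for an \emph{arbitrary} $\gamma$ is $H$-conjugate to one of the finitely many factors $H\cap F_r^{g_i}$, rather than merely to a subgroup of it. The plan is to use the action of $H$ on the Bass--Serre tree of $F_r\ast G$. The subgroup $H\cap F_r^\gamma$ is the stabilizer in $H$ of the vertex $\gamma F_r$. The Kurosh factors correspond to representatives of the $H$-orbits of vertices with nontrivial stabilizer. So the stabilizer of $\gamma F_r$ is $H$-conjugate to the full stabilizer of a representative vertex $g_iF_r$, and Lemma \ref{intersection} guarantees that these representatives are exactly the $C_i$.
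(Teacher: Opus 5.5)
You follow the paper's own route in contrapositive form. The paper's proof likewise combines the Kurosh theorem with Lemma~\ref{intersection} to get an $X$-component $C$ with $H\cap F_r^{\gamma}$ conjugate to $\mathrm{Lab}(C,v)$, and then compares indices. The gap is the step you yourself flag as the main obstacle: what you cite does not establish it.

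Lemma~\ref{intersection} only says that each factor $L_i$ is the \emph{full} stabilizer $H\cap F_r^{g_i}$ of the vertex $g_iF_r$. It does not say that every $H$-orbit of $F_r$-vertices with nontrivial stabilizer contains some $g_iF_r$, and that completeness is exactly what you need. Kurosh uniqueness cannot be applied to an arbitrary decomposition of this shape either. In $\langle a\rangle\ast\langle b\rangle$ the subgroup $H=\langle a,bab^{-1}\rangle$ equals $(H\cap\langle a\rangle)\ast\langle abab^{-1}\rangle$. This is a free product of a full intersection and a free group meeting no conjugate of a factor, yet $bab^{-1}$ is elliptic and is not $H$-conjugate into $\langle a\rangle$. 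So the identification has to use the graph $\Gamma$ itself. The paper's one-line appeal to Kurosh passes over this same point.

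Here is one way to close the gap.
\begin{enumerate}
\item Take $1\neq x=\gamma f\gamma^{-1}\in H\cap F_r^{\gamma}$. Absorb a terminal $F_r$-syllable of $\gamma$ into $f$, so that $x=ubu^{-1}$ in reduced form, with $b\in F_r\setminus\{1\}$ and $\gamma\in uF_r$.
\item By Lemma~\ref{H equals Lab}, some loop at $v_0$ in $\Gamma$ has label $x$. Delete maximal monochromatic subpaths with trivial label, as in the proof of Lemma~\ref{p in component}; they are closed because $\Gamma$ is $F_r\ast G$-based. By uniqueness of reduced forms you obtain a loop $Q_1BQ_3$ with $\mathrm{Lab}(Q_1)=u$, $\mathrm{Lab}(Q_3)=u^{-1}$, and $B$ reading $b$ inside a single $X$-component $C$.
\item The path $\bar Q_1\bar Q_3$ has trivial label, hence is closed. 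So $B$ is a loop at $w=\tau(Q_1)$, and $C$ is not a tree.
\item Let $S$ be a path in $C$ from $w$ to the base point $v$ of $C$. The loop $Q_1S\bar P_v$ gives $h\in H$ with $u\in hg_vF_r$.
\item Hence $H\cap F_r^{\gamma}=h\,(H\cap F_r^{g_v})\,h^{-1}=hg_v\,\mathrm{Lab}(C,v)\,g_v^{-1}h^{-1}$ by Lemma~\ref{intersection}.
\end{enumerate}
Your index comparison then finishes the proof, with no Bass--Serre detour needed.
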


    \begin{proof}
       By Lemma \ref{H equals Lab}, we have $H=\mathrm{Lab}(\Gamma, v_0)$ with base point $v_0:=H \cdot 1$. Then combining Kurosh subgroup theorem with Lemma \ref{intersection}, there is an $X$-component $C$ of $\Gamma$ such that $$\gamma\mathrm{Lab}(C,v)\gamma^{-1}=H \cap F_r^{\gamma},$$
       where $\gamma=\mathrm{Lab}(P)$ for $P$ an approach path from the base point $v_0$ to the base point $v$ of $C$. Hence, the index
       $$[F_r:\mathrm{Lab}(C,v)]=[F_r^\gamma : \gamma\mathrm{Lab}(C,v)\gamma^{-1}]=[F_r^\gamma : H \cap F_r^\gamma]=\infty.$$
       It follows that the $X$-component $C$ is not a finite cover of $F_r$. In fact, $C$ is not a cover of $F_r$ because it is finite.
    \end{proof}

    We now proceed to prove Theorem \ref{main2}.

   \begin{thmbis}{main2} 
  Let $G$ be a LERF group and let $H\leq F_r\ast G$ be a finitely generated subgroup. Then $H$ is $\A \cup \mS$-separable in $F_r\ast G$ if one of the following holds:
    \begin{enumerate}
        \item $H \cap F_r^{\gamma}=1$ for every $\gamma \in F_r \ast G$, or
        \item $H \cap F_r^{\gamma} \neq 1$ has infinite index in $F_r^{\gamma}$ for some $\gamma \in F_r \ast G$.
    \end{enumerate}
   \end{thmbis}
 \begin{proof}
    By Proposition \ref{prop}, we need to find a subgroup  $H_0\leq F_r\ast G$, or equivalently, a cover $$\widetilde{\Gamma}:=\mathrm{Cay}(F_r\ast G,H_0,H_0\cdot 1),$$
    whose index $[F_r\ast G: H_0]=|\V(\widetilde\Gamma)|$ is a sufficiently large prime $p$, such that $\Gamma$ in Eq. (\ref{gamma}) embeds into the finite cover $\widetilde\Gamma$ and there exists an element $\gamma\in  F_r\ast G$ moving less than a certain number of vertices in $\widetilde{\Gamma}$.

    We construct $\widetilde\Gamma$ in the following four steps.
  
         \textbf{Step 1. Embedding of each monochromatic component into a finite cover.}
        
        Since $\Gamma$ is finite, there must be finitely many monochromatic components in $\Gamma$, we denote them by $\Gamma_1,\ldots,\Gamma_\ell$, respectively. 
        
        If hypothesis (1) holds, either there is no $F_r$-component in $\Gamma$ (i.e., $\Gamma$ is $Y$-monochromatic), or every $F_r$-component in $\Gamma$ is a finite tree. Then for the former case, we denote the base point $H\cdot 1$ by $\Gamma_1$ and re-index the only $Y$-monochromatic component as $\Gamma_2$. For the latter case, we assume such an $F_r$-component is $\Gamma_1$ which is not a cover of $F_r$. 
        
        If hypothesis (2) holds, then by Lemma \ref{no cover component 1}, there is an $F_r$-component in $\Gamma$ which is not a cover of $F_r$. Without loss of generality, we again assume such an $F_r$-component is $\Gamma_1$. 
        
        Recall that each finite connected labeled component $\Gamma_i$ is $F_r$- or $G$-based, then $\Gamma_i$ can be embedded in a cover of $F_r$ or $G$ by Lemma \ref{subgraphs of Cayley}. Moreover,  since both $F_r$ and $G$ are LERF, every $\Gamma_i$ can be embedded in a finite cover $\widetilde\Gamma_i$ of $F_r$ or $G$ respectively by Theorem \ref{scott}.

        \textbf{Step 2. Construction of $\Gamma^\ast$.}

        We now consider the pushout of the embedding $\alpha_2: \bigcup_{i\geq 2}\Gamma_i\hookrightarrow \bigcup_{i\geq 2}\widetilde\Gamma_i$ as follows:
         $$\begin{tikzcd}
           \mathop{\bigcup}\limits_{i \geq 2} \Gamma_i  \arrow [hookrightarrow, r,"\alpha_1"] \arrow[hookrightarrow, d, "\alpha_2"]  &  \Gamma \arrow[d] \\
           \mathop{\bigcup}\limits_{i \geq 2} \widetilde\Gamma_i\arrow[r]  &  \Gamma^\ast
           \end{tikzcd}$$
         Note that every $\Gamma_i$ embeds in $\widetilde\Gamma_i$ and  $\Gamma_i$ is a monochromatic component of $\Gamma$, so there are no foldings occur between distinct $\widetilde\Gamma_i$, it follows $\Gamma_i$ embeds in $\Gamma^\ast$. Moreover, every monochromatic component of $\Gamma^\ast$ is $F_r$- or $G$-based, hence $\Gamma^\ast$ is $F_r\ast G$-based by Lemma \ref{free product based}. (See Figure \ref{figure2}.)

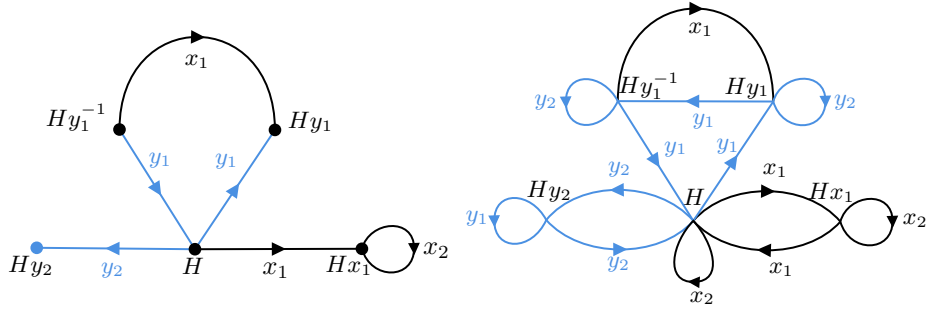
\begin{figure}

\begin{center}
\tikzset{every picture/.style={line width=0.75pt}} 

\begin{tikzpicture}[x=0.75pt,y=0.75pt,yscale=-1,xscale=1]

\draw [color={rgb, 255:red, 74; green, 144; blue, 226 }  ,draw opacity=1 ]   (80.23,157.79) -- (118.3,218.21) ;
\draw [shift={(101.29,191.22)}, rotate = 237.78] [fill={rgb, 255:red, 74; green, 144; blue, 226 }  ,fill opacity=1 ][line width=0.08]  [draw opacity=0] (7.14,-3.43) -- (0,0) -- (7.14,3.43) -- cycle    ;
\draw [color={rgb, 255:red, 74; green, 144; blue, 226 }  ,draw opacity=1 ]   (118.3,218.21) -- (158.65,157.99) ;
\draw [shift={(140.59,184.94)}, rotate = 123.82] [fill={rgb, 255:red, 74; green, 144; blue, 226 }  ,fill opacity=1 ][line width=0.08]  [draw opacity=0] (7.14,-3.43) -- (0,0) -- (7.14,3.43) -- cycle    ;
\draw [color={rgb, 255:red, 74; green, 144; blue, 226 }  ,draw opacity=1 ]   (38.5,217.49) -- (118.3,218.21) ;
\draw [shift={(73.1,217.8)}, rotate = 0.52] [fill={rgb, 255:red, 74; green, 144; blue, 226 }  ,fill opacity=1 ][line width=0.08]  [draw opacity=0] (7.14,-3.43) -- (0,0) -- (7.14,3.43) -- cycle    ;
\draw [shift={(38.5,217.49)}, rotate = 0.52] [color={rgb, 255:red, 74; green, 144; blue, 226 }  ,draw opacity=1 ][fill={rgb, 255:red, 74; green, 144; blue, 226 }  ,fill opacity=1 ][line width=0.75]      (0, 0) circle [x radius= 2.68, y radius= 2.68]   ;
\draw    (118.3,218.21) -- (202.5,217.99) ;
\draw [shift={(202.5,217.99)}, rotate = 359.85] [color={rgb, 255:red, 0; green, 0; blue, 0 }  ][fill={rgb, 255:red, 0; green, 0; blue, 0 }  ][line width=0.75]      (0, 0) circle [x radius= 2.68, y radius= 2.68]   ;
\draw [shift={(164.2,218.09)}, rotate = 179.85] [fill={rgb, 255:red, 0; green, 0; blue, 0 }  ][line width=0.08]  [draw opacity=0] (7.14,-3.43) -- (0,0) -- (7.14,3.43) -- cycle    ;
\draw [shift={(118.3,218.21)}, rotate = 359.85] [color={rgb, 255:red, 0; green, 0; blue, 0 }  ][fill={rgb, 255:red, 0; green, 0; blue, 0 }  ][line width=0.75]      (0, 0) circle [x radius= 2.68, y radius= 2.68]   ;
\draw    (80.23,157.79) .. controls (80.27,95.63) and (158.36,95.24) .. (158.65,157.99) ;
\draw [shift={(158.65,157.99)}, rotate = 89.74] [color={rgb, 255:red, 0; green, 0; blue, 0 }  ][fill={rgb, 255:red, 0; green, 0; blue, 0 }  ][line width=0.75]      (0, 0) circle [x radius= 2.68, y radius= 2.68]   ;
\draw [shift={(123.45,111.27)}, rotate = 181.76] [fill={rgb, 255:red, 0; green, 0; blue, 0 }  ][line width=0.08]  [draw opacity=0] (7.14,-3.43) -- (0,0) -- (7.14,3.43) -- cycle    ;
\draw [shift={(80.23,157.79)}, rotate = 270.04] [color={rgb, 255:red, 0; green, 0; blue, 0 }  ][fill={rgb, 255:red, 0; green, 0; blue, 0 }  ][line width=0.75]      (0, 0) circle [x radius= 2.68, y radius= 2.68]   ;
\draw [color={rgb, 255:red, 0; green, 0; blue, 0 }  ,draw opacity=1 ]   (202.5,217.99) .. controls (206.41,203.69) and (228.82,201.74) .. (228.82,217.76) .. controls (228.82,233.78) and (207.11,234.17) .. (202.5,217.99) -- cycle ;
\draw  [fill={rgb, 255:red, 0; green, 0; blue, 0 }  ,fill opacity=1 ] (228.91,219.96) -- (226.58,215.63) -- (230.9,215.47) -- cycle ;

\draw (93.52,168.26) node [anchor=north west][inner sep=0.75pt]  [font=\small] [align=left] {\textcolor[rgb]{0.29,0.56,0.89}{$\displaystyle y_{1}$}};
\draw (127.02,168.76) node [anchor=north west][inner sep=0.75pt]  [font=\small] [align=left] {\textcolor[rgb]{0.29,0.56,0.89}{$\displaystyle y_{1}$}};
\draw (41.65,143.52) node [anchor=north west][inner sep=0.75pt]  [font=\small] [align=left] {$\displaystyle Hy_{1}^{-1}$};
\draw (163.65,147.52) node [anchor=north west][inner sep=0.75pt]  [font=\small] [align=left] {$\displaystyle Hy_{1}$};
\draw (111.65,118.52) node [anchor=north west][inner sep=0.75pt]  [font=\small] [align=left] {$\displaystyle x_{1}$};
\draw (22.65,220.02) node [anchor=north west][inner sep=0.75pt]  [font=\small] [align=left] {$\displaystyle Hy_{2}$};
\draw (110.15,221.02) node [anchor=north west][inner sep=0.75pt]  [font=\small] [align=left] {$\displaystyle H$};
\draw (182.65,220.02) node [anchor=north west][inner sep=0.75pt]  [font=\small] [align=left] {$\displaystyle Hx_{1}$};
\draw (150.83,223.5) node [anchor=north west][inner sep=0.75pt]   [align=left] {$\displaystyle x_{1}$};
\draw (231.83,213.5) node [anchor=north west][inner sep=0.75pt]   [align=left] {$\displaystyle x_{2}$};
\draw (69.52,223.76) node [anchor=north west][inner sep=0.75pt]  [font=\small] [align=left] {\textcolor[rgb]{0.29,0.56,0.89}{$\displaystyle y_{2}$}};

\draw [color={rgb, 255:red, 74; green, 144; blue, 226 }  ,draw opacity=1 ]   (295.47,203.46) .. controls (312.93,223.15) and (352.22,223.32) .. (369.64,203.87) ;
\draw [shift={(336.4,218.24)}, rotate = 179.5] [fill={rgb, 255:red, 74; green, 144; blue, 226 }  ,fill opacity=1 ][line width=0.08]  [draw opacity=0] (7.14,-3.43) -- (0,0) -- (7.14,3.43) -- cycle    ;
\draw    (369.64,203.87) .. controls (386.96,183.86) and (426.39,183.86) .. (443.81,204.29) ;
\draw [shift={(410.94,189.08)}, rotate = 181.43] [fill={rgb, 255:red, 0; green, 0; blue, 0 }  ][line width=0.08]  [draw opacity=0] (7.14,-3.43) -- (0,0) -- (7.14,3.43) -- cycle    ;
\draw    (369.64,203.87) .. controls (387.1,223.56) and (426.39,223.73) .. (443.81,204.29) ;
\draw [shift={(401.64,218.52)}, rotate = 2.27] [fill={rgb, 255:red, 0; green, 0; blue, 0 }  ][line width=0.08]  [draw opacity=0] (7.14,-3.43) -- (0,0) -- (7.14,3.43) -- cycle    ;
\draw    (331.56,143.46) .. controls (331.6,81.3) and (409.69,80.91) .. (409.98,143.66) ;
\draw [shift={(374.78,96.94)}, rotate = 181.76] [fill={rgb, 255:red, 0; green, 0; blue, 0 }  ][line width=0.08]  [draw opacity=0] (7.14,-3.43) -- (0,0) -- (7.14,3.43) -- cycle    ;
\draw [color={rgb, 255:red, 74; green, 144; blue, 226 }  ,draw opacity=1 ]   (331.56,143.46) -- (409.98,143.66) ;
\draw [shift={(365.47,143.54)}, rotate = 0.14] [fill={rgb, 255:red, 74; green, 144; blue, 226 }  ,fill opacity=1 ][line width=0.08]  [draw opacity=0] (7.14,-3.43) -- (0,0) -- (7.14,3.43) -- cycle    ;
\draw [color={rgb, 255:red, 74; green, 144; blue, 226 }  ,draw opacity=1 ]   (331.56,143.46) .. controls (323.9,128.97) and (305.34,128.19) .. (305.34,143.82) .. controls (305.34,159.45) and (324.25,159.45) .. (331.56,143.46) -- cycle ;
\draw  [color={rgb, 255:red, 74; green, 144; blue, 226 }  ,draw opacity=1 ][fill={rgb, 255:red, 74; green, 144; blue, 226 }  ,fill opacity=1 ] (305.43,146.02) -- (303.1,141.69) -- (307.42,141.53) -- cycle ;

\draw [color={rgb, 255:red, 74; green, 144; blue, 226 }  ,draw opacity=1 ]   (409.98,143.66) .. controls (413.89,129.36) and (436.3,127.4) .. (436.3,143.43) .. controls (436.3,159.45) and (414.59,159.84) .. (409.98,143.66) -- cycle ;
\draw  [color={rgb, 255:red, 74; green, 144; blue, 226 }  ,draw opacity=1 ][fill={rgb, 255:red, 74; green, 144; blue, 226 }  ,fill opacity=1 ] (436.39,145.63) -- (434.06,141.3) -- (438.38,141.14) -- cycle ;

\draw [color={rgb, 255:red, 74; green, 144; blue, 226 }  ,draw opacity=1 ]   (331.56,143.46) -- (369.64,203.87) ;
\draw [shift={(352.62,176.88)}, rotate = 237.78] [fill={rgb, 255:red, 74; green, 144; blue, 226 }  ,fill opacity=1 ][line width=0.08]  [draw opacity=0] (7.14,-3.43) -- (0,0) -- (7.14,3.43) -- cycle    ;
\draw [color={rgb, 255:red, 74; green, 144; blue, 226 }  ,draw opacity=1 ]   (369.64,203.87) -- (409.98,143.66) ;
\draw [shift={(391.92,170.61)}, rotate = 123.82] [fill={rgb, 255:red, 74; green, 144; blue, 226 }  ,fill opacity=1 ][line width=0.08]  [draw opacity=0] (7.14,-3.43) -- (0,0) -- (7.14,3.43) -- cycle    ;
\draw [color={rgb, 255:red, 74; green, 144; blue, 226 }  ,draw opacity=1 ]   (295.47,203.46) .. controls (312.35,182.89) and (351.56,183.28) .. (369.64,203.87) ;
\draw [shift={(327.44,188.38)}, rotate = 358.74] [fill={rgb, 255:red, 74; green, 144; blue, 226 }  ,fill opacity=1 ][line width=0.08]  [draw opacity=0] (7.14,-3.43) -- (0,0) -- (7.14,3.43) -- cycle    ;
\draw [color={rgb, 255:red, 74; green, 144; blue, 226 }  ,draw opacity=1 ]   (295.47,203.46) .. controls (287.81,188.97) and (269.25,188.18) .. (269.25,203.81) .. controls (269.25,219.44) and (288.16,219.44) .. (295.47,203.46) -- cycle ;
\draw  [color={rgb, 255:red, 74; green, 144; blue, 226 }  ,draw opacity=1 ][fill={rgb, 255:red, 74; green, 144; blue, 226 }  ,fill opacity=1 ] (269.34,206.02) -- (267.01,201.69) -- (271.33,201.53) -- cycle ;
\draw [color={rgb, 255:red, 0; green, 0; blue, 0 }  ,draw opacity=1 ]   (369.64,203.87) .. controls (358.92,214.8) and (354.72,235.12) .. (370.12,234.73) .. controls (385.53,234.34) and (379.58,214.8) .. (369.64,203.87) -- cycle ;
\draw  [fill={rgb, 255:red, 0; green, 0; blue, 0 }  ,fill opacity=1 ] (367.92,234.59) -- (372.46,232.71) -- (372.19,237.02) -- cycle ;

\draw [color={rgb, 255:red, 0; green, 0; blue, 0 }  ,draw opacity=1 ]   (443.81,204.29) .. controls (447.72,189.99) and (470.13,188.04) .. (470.13,204.06) .. controls (470.13,220.08) and (448.42,220.47) .. (443.81,204.29) -- cycle ;

\draw  [fill={rgb, 255:red, 0; green, 0; blue, 0 }  ,fill opacity=1 ] (470.21,206.26) -- (467.89,201.93) -- (472.21,201.77) -- cycle ;

\draw (365.65,102.52) node [anchor=north west][inner sep=0.75pt]  [font=\small] [align=left] {$x_1$};
\draw (367.74,148.29) node [anchor=north west][inner sep=0.75pt]  [font=\small] [align=left] {\textcolor[rgb]{0.29,0.56,0.89}{$y_1$}};
\draw (353.52,162.26) node [anchor=north west][inner sep=0.75pt]  [font=\small] [align=left] {\textcolor[rgb]{0.29,0.56,0.89}{$y_1$}};
\draw (379.82,160.93) node [anchor=north west][inner sep=0.75pt]  [font=\small] [align=left] {\textcolor[rgb]{0.29,0.56,0.89}{$y_1$}};
\draw (288.07,138.92) node [anchor=north west][inner sep=0.75pt]  [font=\small] [align=left] {\textcolor[rgb]{0.29,0.56,0.89}{$y_2$}};
\draw (439.18,138.72) node [anchor=north west][inner sep=0.75pt]  [font=\small] [align=left] {\textcolor[rgb]{0.29,0.56,0.89}{$y_2$}};
\draw (403.24,173.62) node [anchor=north west][inner sep=0.75pt]  [font=\small] [align=left] {$x_1$};
\draw (406.34,223.51) node [anchor=north west][inner sep=0.75pt]  [font=\small] [align=left] {$x_1$};
\draw (473.8,198.95) node [anchor=north west][inner sep=0.75pt]  [font=\small] [align=left] {$x_2$};
\draw (253.36,197.49) node [anchor=north west][inner sep=0.75pt]  [font=\small] [align=left] {\textcolor[rgb]{0.29,0.56,0.89}{$y_1$}};
\draw (325.23,172.55) node [anchor=north west][inner sep=0.75pt]  [font=\small] [align=left] {\textcolor[rgb]{0.29,0.56,0.89}{$y_2$}};
\draw (324.73,220.56) node [anchor=north west][inner sep=0.75pt]  [font=\small] [align=left] {\textcolor[rgb]{0.29,0.56,0.89}{$y_2$}};
\draw (366.44,238.08) node [anchor=north west][inner sep=0.75pt]  [font=\small] [align=left] {$x_2$};
\draw (284.01,182.16) node [anchor=north west][inner sep=0.75pt]  [font=\small] [align=left] {$Hy_2$};
\draw (362.95,186.31) node [anchor=north west][inner sep=0.75pt]  [font=\small] [align=left] {$H$};
\draw (425.92,183.41) node [anchor=north west][inner sep=0.75pt]  [font=\small] [align=left] {$Hx_1$};
\draw (330.24,127.5) node [anchor=north west][inner sep=0.75pt]  [font=\small] [align=left] {$Hy_1^{-1}$};
\draw (383.79,130.01) node [anchor=north west][inner sep=0.75pt]  [font=\small] [align=left] {$Hy_1$};

\end{tikzpicture}

\end{center}
\caption{An example for $\Gamma$ and $\Gamma^\ast$ when $X=\{x_1,x_2\}$, $Y=\{ y_1,y_2 \}$. We take $H=\langle y_1x_1^{-1}y_1, x_1x_2x_1^{-1}\rangle$ and $\gamma_1=y_2$.}\label{figure2}
\end{figure}
       \textbf{Step 3.  Modify $\Gamma_1$ to obtain a precover $\Gamma'$ of $F_r\ast G$.}

         Denote $|\V(\Gamma^\ast)|=k$. Since the labeled graph $\Gamma_1$ is $F_r$-based and not a cover, it is not an $X^\ast$-saturated graph by Lemma \ref{subgraphs of Cayley}. From Wilton's construction in Lemma \ref{alternating embedding}, we know that after attaching some graphs $W_n$ and $V_s$, the graph $\Gamma_1$ can embed in a cover $\widetilde\Gamma_1$ of $F_r$ with 
         $$|\V(\widetilde\Gamma_1)|=|\V(\Gamma_1)|+n+4.$$ We now take a sufficiently large prime $p$ for $k+5$ as in Lemma \ref{Jordan for p}, and let $n=p-k-4$. We construct the pushout of the embedding $\alpha_2':\Gamma_1 \hookrightarrow \widetilde\Gamma_1$ as following:
         $$\begin{tikzcd}
           \Gamma_1  \arrow[hookrightarrow, r, "\alpha'_1"] \arrow[hookrightarrow, d, "\alpha'_2"]  &  \Gamma^\ast \arrow[d] \\
           \widetilde\Gamma_1 \arrow[r]  &  \Gamma'
           \end{tikzcd}$$
           
        Notice that $\Gamma_1$ embeds in $\widetilde\Gamma_1$ and $\widetilde\Gamma_1$ is monochromatic, there are no foldings occur between $\widetilde\Gamma_1$ and $\Gamma^\ast$, then both $\Gamma^\ast$ and $\widetilde\Gamma_1$ embed in $\Gamma'$. Since $\Gamma^\ast$ is $F_r \ast G$-based and $\widetilde\Gamma_1$ is $F_r$-based, $\Gamma'$ is $F_r\ast G$-based by Lemma \ref{free product based} and every monochromatic component is a cover. So $\Gamma'$ is a precover of $F_r\ast G$.
         
         \textbf{Step 4. Construction of the final cover $\widetilde{\Gamma}$.}
         
         By Lemma \ref{embedding of precover}, the precover $\Gamma'$ can embed in a cover $\widetilde{\Gamma}$ of $F_r \ast G$ without adding new vertices, so $\Gamma$ itself embeds in the cover $\widetilde{\Gamma}$ with
         $$|\mathrm{V}(\widetilde{\Gamma})|=p=k+n+4.$$
         Since $x_2$ acts trivially on $\V(W_n)$ and acts nontrivially on $\V(V_s)$, there must be less than $k+5$ vertices (lying in $\Gamma^\ast \cup V_s$ ) in $\widetilde{\Gamma}$ moved by $x_2$.  Therefore, the cover $\widetilde{\Gamma}$ is precisely what we need, and the proof is complete.
    \end{proof}

    \noindent\textbf{Acknowledgements.}  The authors would like to thank Qian Chen very much for his comments. Thanks are also extended to the editor and the anonymous referee for their time.


\end{document}